\newcommand\N{\mathbb{N}}
\newcommand\Z{\mathbb{Z}}
\newtheorem{theorem}{Theorem}[section]
\newtheorem{corollary}[theorem]{Corollary}
\newtheorem{lemma}[theorem]{Lemma}
\newtheorem{proposition}[theorem]{Proposition}
\newtheorem{theorema}{Theorem}
\theoremstyle{remark}
\newtheorem{remark}[theorem]{Remark}
\DeclareMathOperator\Aut{Aut}
\DeclareMathOperator\Ker{Ker}
\DeclareMathOperator\id{id}
\DeclareMathOperator\Soc{Soc}
\title{From actions of an abelian group on itself to left braces}
\author{A. Ballester-Bolinches\thanks{Departament de Matem{\`a}tiques, Universitat de Val{\`e}ncia; Dr.\ Moliner, 50; 46100 Burjassot, Val\`encia, Spain; \texttt{Adolfo.Ballester@uv.es}; ORCID 0000-0002-2051-9075} \and R. Esteban-Romero\thanks{Departament de Matem{\`a}tiques, Universitat de Val{\`e}ncia; Dr.\ Moliner, 50; 46100 Burjassot, Val\`encia, Spain; \texttt{Ramon.Esteban@uv.es}; ORCID 0000-0002-2321-8139}\and L. A. Kurdachenko\thanks{Department of Algebra and Geometry, Oles Honchar Dnipro National University, Dnipro 49010, Ukraine; \texttt{lkurdachenko@gmail.com}; ORCID 0000-0002-6368-7319}\ \thanks{Departament de Matem{\`a}tiques, Universitat de Val{\`e}ncia; Dr.\ Moliner, 50; 46100 Burjassot, Val\`encia, Spain} \and V. P{\'e}rez-Calabuig\thanks{Departament de Matem{\`a}tiques, Universitat de Val{\`e}ncia; Dr.\ Moliner, 50; 46100 Burjassot, Val\`encia, Spain; \texttt{Vicent.Perez-Calabuig@uv.es}; ORCID 0000-0003-4101-8656}}
\date{}
\begin{document}
\maketitle

\begin{abstract}
  We present a construction of left braces of right nilpotency class at most two based on suitable actions of an abelian group on itself with an invariance condition. This construction allows us to recover the construction of a free right nilpotent one-generated left brace of class two.

  \emph{Mathematics Subject Classification (2020):} 16T25, % Yang-Baxter equations
  16N40, % Nil and nilpotent radicals, sets, ideals, associative rings
  81R50 % 	Quantum groups and related algebraic methods applied to problems in quantum theory [See also 16T20, 17B37]

  \emph{Keywords:} left brace, right nilpotent left brace, one-generated left brace, Smoktunowicz-nilpotent left brace.
\end{abstract}

\section{Introduction}
A \emph{skew left brace} $(A, {+}, {\cdot})$ consists of a set $A$ with two binary operations such that $(A, {+})$ and $(A, {\cdot})$ are groups related by the modified distributive law $a(b+c)=ab-a+ac$ for all $a$, $b$, $c\in A$. If, in addition, the group $(A, {+})$ is abelian, we speak of a \emph{skew left brace of abelian type} or, simply, a \emph{left brace}. When the operations $+$ and $\cdot$ are clear from the context, we will refer to the left brace $(A, {+}, {\cdot})$ simply as $A$. The structure of left brace was introduced by Rump as a generalisation of Jacobson radical rings in \cite{Rump07}. Guarnieri and Vendramin \cite{GuarnieriVendramin17} introduced skew left braces by removing the commutativity of the addition in Rump's left braces. An interesting survey on left braces can be found in the paper of Ced{\'o} \cite{Cedo18}, where most of the definitions and results presented in this section can be found.

The structure of a left brace depends critically on the lambda action of the multiplicative group on the additive group. Let $a$, $b$ be elements of a left brace $A$. We define $\lambda_a(b)=-a+ab$. It is well known that $\lambda\colon (A, {\cdot})\longrightarrow \Aut(A, {+})$ given by $\lambda(a)=\lambda_a$ for $a\in A$ is a group homomorphism. 

Another interesting operation that gives structural information on a left brace is the star operation. We define it as $a*b=-a+ab-b=\lambda_a(b)-b$ for $a$, $b\in B$. Given two subsets $L$ and $M$ of $A$, we define $L*M$ as the subgroup of the additive group generated by $\{l*m\mid l\in L, m\in M\}$. We set $A^{(1)}=A$, $A^{(r+1)}=A^{(r)}*A$ for a natural $r$, and $A^1=A$, $A^{r+1}=A*A^r$ for a natural $r$.

These constructions allow us to define two different nilpotencies for left braces: we say that a left brace $A$ is \emph{left nilpotent} if there exists a natural $m$ such that $A^{m+1}=\{0\}$, and that $A$ is \emph{right nilpotent} if there exists a natural $m$ such that $A^{(m+1)}=\{0\}$. If $A$ is right nilpotent (respectively, left nilpotent), the smallest $m$ satisfying $A^{(m+1)}=\{0\}$, but $A^{(m)}\ne \{0\}$ (respectively, $A^{m+1}=\{0\}$, but $A^m\ne \{0\}$) is called the \emph{right} (respectively, \emph{left}) \emph{nilpotency class} of $A$. A left brace that is both left nilpotent and right nilpotent is said to be \emph{nilpotent in the sense of Smoktunowicz} or, simply, \emph{Smoktunowicz-nilpotent}. These left braces were studied by Smoktunowicz in \cite{Smoktunowicz18-tams}. For left braces, this concept coincides with the central nilpotency introduced in \cite{BonattoJedlicka-central-nilp-skew-braces} for skew left braces (see also~\cite{JespersVanAntwerpenVendramin22}).

We say that a subset $S$ of a left brace $(A, {+}, {\cdot})$ is a \emph{subbrace} of $(A, {+}, {\cdot})$ if $S$ is a subgroup of $(A, {+})$ and of $(A, {\cdot})$. The intersection of a family of subbraces of a given left brace $(A, {+}, {\cdot})$ is again a subbrace of $(A, {+}, {\cdot})$. If $X$ is a subset of $(A, {+}, {\cdot})$, the intersection of all subbraces of $(A, {+}, {\cdot})$ is a subbrace of $(A, {+}, {\cdot})$ called the \emph{subbrace generated by} $X$. When $X=\{a\}$ is a singleton, we simply speak of the \emph{subbrace generated by~$a$}. The description of the subbrace generated by a set or the free object on a set $X$ in the category of left braces seems to be a difficult problem in general. In fact, Agata and Alicja Smoktunowicz ask about such description in the class of left braces of right nilpotency class at most two \cite[Question~5.5]{SmoktunowiczSmoktunowicz18}. % This is the reason we start to solve this problem in easy cases, like when $A^{(3)}=\{0\}$.

Stefanello and Trappeniers \cite[Theorem~3.13]{StefanelloTrappeniers23-jpaa} have given some interesting characterisations of left braces of right nilpotency class at most two. We present here their results in the form we need in the sequel.

\begin{proposition}\label{prop-3.13}
  Let $A$ be a left brace. The following statements are equivalent in pairs:
  \begin{enumerate}
  \item $A^{(3)}=\{0\}$.\label{en-ST-1}
  \item If $z\in A^{(2)}$ and $a\in A$, then $z*a=0$.\label{en-ST-1b}
  \item $A^{(2)}\subseteq \Ker\lambda$.\label{en-ST-2}
  \item $\lambda$ is a group homomorphism between $(A, {+})$ and $\Aut(A, {+})$.\label{en-ST-3}
  \item $\lambda_{\lambda_a(b)}=\lambda_b$ for every $a$, $b\in A$.\label{en-ST-4}
  \end{enumerate}
\end{proposition}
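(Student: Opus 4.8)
The plan is to prove the equivalences $(1)\Leftrightarrow(2)\Leftrightarrow(3)$ by directly unravelling the definitions, and then to close the cycle $(3)\Rightarrow(4)\Rightarrow(5)\Rightarrow(3)$, which together make all five statements mutually equivalent. The first group of equivalences is essentially formal. By definition $A^{(3)}=A^{(2)}*A$ is the additive subgroup generated by the elements $z*a$ with $z\in A^{(2)}$ and $a\in A$, so $A^{(3)}=\{0\}$ holds precisely when all these generators vanish; this is $(1)\Leftrightarrow(2)$. Since $z*a=\lambda_z(a)-a$, the assertion ``$z*a=0$ for every $a\in A$'' is the same as ``$\lambda_z=\id$'', that is, ``$z\in\Ker\lambda$'', so letting $z$ range over $A^{(2)}$ gives $(2)\Leftrightarrow(3)$. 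Neither of these uses anything beyond the definitions of $A^{(3)}$ and of the star operation.

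The substance lies in the cycle, and the recurring tool is the translation between the two group operations through the identity $ab=a+\lambda_a(b)$ (a restatement of $\lambda_a(b)=-a+ab$) together with the fact that $\lambda$ is a homomorphism of $(A,{\cdot})$ into $\Aut(A,{+})$. For $(3)\Rightarrow(4)$ I would write, for $a$, $b\in A$ and using commutativity of $+$, that $ab=a+\lambda_a(b)=(a+b)+(a*b)$; since $a*b\in A^{(2)}\subseteq\Ker\lambda$ one also gets $(a*b)\cdot(a+b)=(a*b)+(a+b)=ab$, whence $\lambda_a\lambda_b=\lambda_{ab}=\lambda_{(a*b)\cdot(a+b)}=\lambda_{a*b}\lambda_{a+b}=\lambda_{a+b}$, which is $(4)$. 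For $(4)\Rightarrow(5)$ I would use that $(4)$ forces $\lambda_{-a}=\lambda_a^{-1}$, so that $\lambda_{\lambda_a(b)}=\lambda_{-a+ab}=\lambda_{-a}\lambda_{ab}=\lambda_a^{-1}\lambda_a\lambda_b=\lambda_b$. For $(5)\Rightarrow(3)$ I would first note $\lambda_b(b^{-1})=-b$ (from $0=b\cdot b^{-1}=b+\lambda_b(b^{-1})$) and then compute, using $(5)$, that $\lambda_a(b)\cdot b^{-1}=\lambda_a(b)+\lambda_{\lambda_a(b)}(b^{-1})=\lambda_a(b)+\lambda_b(b^{-1})=\lambda_a(b)-b=a*b$; hence $\lambda_{a*b}=\lambda_{\lambda_a(b)}\lambda_{b^{-1}}=\lambda_b\lambda_b^{-1}=\id$, so every generator $a*b$ of $A^{(2)}$ lies in $\Ker\lambda$. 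Finally, since the two operations agree on $\Ker\lambda$ (for $k$, $k'\in\Ker\lambda$ one has $kk'=k+\lambda_k(k')=k+k'$), $\Ker\lambda$ is an additive subgroup, and therefore $A^{(2)}\subseteq\Ker\lambda$, which is $(3)$.

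The step I expect to require the most care is the last one, $(5)\Rightarrow(3)$: the natural computation only shows that each \emph{generator} $a*b$ of $A^{(2)}$ lies in $\Ker\lambda$, and to promote this to $A^{(2)}\subseteq\Ker\lambda$ one must first know that $\Ker\lambda$ is closed under the addition, a property that is not among the hypotheses and must be extracted from the brace axioms (concretely, from $ab=a+\lambda_a(b)$ applied inside $\Ker\lambda$). Once that observation is in place, all the remaining implications are one-line verifications with the modified distributive law and the multiplicativity of $\lambda$.
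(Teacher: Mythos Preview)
Your proposal is correct and follows essentially the same route as the paper: the same chain $(1)\Leftrightarrow(2)\Leftrightarrow(3)$ by definition-chasing, then the cycle $(3)\Rightarrow(4)\Rightarrow(5)\Rightarrow(3)$ via the identity $ab=a+\lambda_a(b)$ and multiplicativity of $\lambda$. The only noteworthy difference is in closing $(5)\Rightarrow(3)$: the paper invokes the previously stated fact that $\Ker\lambda=\Soc A$ is an ideal to pass from generators $a*b$ to all of $A^{(2)}$, whereas you give the direct (and more self-contained) argument that addition and multiplication agree on $\Ker\lambda$, making it an additive subgroup.
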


The left braces satisfying Condition~\ref{en-ST-3} of Proposition~\ref{prop-3.13} have been studied by Bardakov, Neshchadim, and Yadav in \cite{BardakovNeshchadimYadav22-jpaa}, who called them \emph{$\lambda$-homomorphic}. They turn out to be examples of \emph{bi-skew left braces} or \emph{symmetric skew left braces}, that is, skew left braces $(A, {+}, {\cdot})$ for which $(A, {\cdot}, {+})$ is again a skew left brace, studied for the first time by Childs in \cite{Childs19} and later by Caranti \cite{Caranti20}, Koch \cite{Koch21}, and Caranti and Stefanello \cite{CarantiStefanello21}. We will prove Proposition~\ref{prop-3.13} in Section~\ref{sec-prelim} for the sake of completeness.

A subbrace $S$ of a left brace $A$ is called a \emph{left ideal} of $A$ if $A*S\subseteq S$. This is equivalent to saying that $\lambda_a(S)\subseteq S$ for each $a\in S$. A subbrace $S$ of a left brace $A$ is called an \emph{ideal} of $A$ if $A*S\subseteq S$ and $S*A\subseteq A$. This is equivalent to saying that $S$ is a left ideal of $A$ and $S$ is a normal subgroup of the multiplicative group of~$S$. We obtain that $A^{(r)}$ is an ideal of $A$ for all naturals $r$ and that $A^r$ is a left ideal of $A$ for all naturals~$r$. The \emph{socle} of $A$, defined as the kernel of $\lambda$ in the case of left braces, is another example of an ideal of~$A$.

The first result of this paper is a general construction of a left brace from an abelian group $(A, {+})$ and an action of $(A, {+})$ on itself that satisfies an invariance condition from which the universal constructions related to one-generated subbraces of right nilpotent left braces of class at most two directly follow. 
\begin{theorema}\label{th-mod-brace}
  Suppose that $(B, {+})$ is an abelian group acting on itself by means of an action $\lambda\colon (B, {+})\longrightarrow \Aut(B, {+})$ given by $\lambda(x)=\lambda_x$ for $x\in B$. Suppose also that if $x$, $y\in B$, then $\lambda_{\lambda_y(x)}=\lambda_x$. We can define a product $\cdot$ on $B$ by means of $xy=x+\lambda_x(y)$ for $x$, $y\in B$. Then $(B, {+}, {\cdot})$ is a left brace with $B^{(3)}=\{0\}$.
\end{theorema}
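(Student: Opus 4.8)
The plan is to verify directly that $(B, {+}, {\cdot})$ satisfies all the left brace axioms and then deduce $B^{(3)}=\{0\}$ from Proposition~\ref{prop-3.13}. Since $(B, {+})$ is already assumed to be an abelian group, the first task is to show that $(B, {\cdot})$ is a group. Associativity is where the invariance hypothesis does the work: for $x$, $y$, $z\in B$ one computes $(xy)z=x+\lambda_x(y)+\lambda_{x+\lambda_x(y)}(z)$, and, using that $\lambda$ is an additive homomorphism (so $\lambda_{x+\lambda_x(y)}=\lambda_x\circ\lambda_{\lambda_x(y)}$) together with the hypothesis $\lambda_{\lambda_x(y)}=\lambda_y$, this equals $x+\lambda_x(y)+\lambda_x(\lambda_y(z))$, which is exactly $x(yz)=x+\lambda_x\bigl(y+\lambda_y(z)\bigr)$. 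The element $0$ is a two-sided identity because $\lambda_0=\id$ and $\lambda_x(0)=0$; and each $x$ has a right inverse, namely $\lambda_{-x}(-x)$, since $\lambda_x^{-1}=\lambda_{-x}$ (as $\lambda_x\lambda_{-x}=\lambda_0=\id$) gives $x\cdot\lambda_{-x}(-x)=x+\lambda_x(\lambda_{-x}(-x))=x-x=0$. A monoid in which every element has a right inverse is a group, so $(B, {\cdot})$ is a group.

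Next I would check the modified distributive law, which follows immediately from the commutativity of $+$: for $a$, $b$, $c\in B$ we have $a(b+c)=a+\lambda_a(b+c)=a+\lambda_a(b)+\lambda_a(c)$, while $ab-a+ac=\bigl(a+\lambda_a(b)\bigr)-a+\bigl(a+\lambda_a(c)\bigr)=a+\lambda_a(b)+\lambda_a(c)$. Hence $(B, {+}, {\cdot})$ is a left brace of abelian type.

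It remains to identify the lambda map of this brace with the given action: the brace's lambda sends $a$ to the map $b\mapsto -a+ab=-a+a+\lambda_a(b)=\lambda_a(b)$, so the two coincide. Since $\lambda$ is by hypothesis a group homomorphism from $(B, {+})$ to $\Aut(B, {+})$, condition~\ref{en-ST-3} of Proposition~\ref{prop-3.13} is satisfied, and therefore $B^{(3)}=\{0\}$. (Equivalently, the invariance condition $\lambda_{\lambda_y(x)}=\lambda_x$ is literally condition~\ref{en-ST-4} of that proposition.)

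All of the computations are routine. The only step that genuinely requires the invariance hypothesis for establishing the brace structure itself is the associativity of $\cdot$, so that is the part I would write out in full; everything else is a short direct verification.
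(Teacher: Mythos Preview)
Your proof is correct and follows essentially the same route as the paper's: both verify associativity via $\lambda_{x+\lambda_x(y)}=\lambda_x\circ\lambda_{\lambda_x(y)}=\lambda_x\circ\lambda_y$, check that $0$ is the identity, exhibit the inverse $-\lambda_x^{-1}(x)=\lambda_{-x}(-x)$, and verify the brace identity directly. The only cosmetic differences are that the paper checks the left-inverse equality explicitly (using the invariance hypothesis once more) rather than appealing to the ``monoid with right inverses is a group'' fact, and it proves $B^{(3)}=\{0\}$ by the short direct computation $\lambda_{a*b}=\lambda_{\lambda_a(b)}\circ\lambda_b^{-1}=\id$ instead of citing Proposition~\ref{prop-3.13}; neither difference is substantive.
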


% We give the proof of this result in Section~\ref{sec-mod-braces}. It is clear that, a
As a consequence of Proposition~\ref{prop-3.13}, all left braces of right nilpotency class two can be described in this way.

% Some universal constructions of one-generated subbraces of right nilpotent braces of right nilpotency class at most two can be obtained as a consequence of Theorem~\ref{th-mod-brace}.

Rump \cite{Rump20}, answering the above-mentioned question of Smoktunowicz and Smoktunowicz, has given a description of the free one-generated right nilpotent left brace of class two and a description of subbrace generated by an element of a right nilpotent left brace of class at most two.

\begin{theorema}[Rump {\cite{Rump20}}]\label{th-subbrace-1gen}
Let $(A, {+}, {\cdot})$ be a left brace with $A^{(3)}=\{0\}$. Let $a\in A$. For $i\in\Z$, we define $a_i=\lambda_{a^i}(a)=-a^i+a^{i+1}$. In particular, $a_0=-a^0+a^1=-0+a=a$. We observe that $\lambda_{a^j}(a_i)=\lambda_{a^j}(\lambda_{a^i}(a))=\lambda_{a^ja^i}(a)=\lambda_{a^{i+j}}(a)=a_{i+j}$.
  The set
  \[B=\Bigl\{\sum_{i\in\Z}x_ia_i\mid x_i\in\Z, i\in\Z, \text{ and $\{i\in\Z\mid x_i\ne 0\}$ is finite}\Bigr\}.\]
coincides with the subbrace of $A$ generated by~$a\in A$.
  Furthermore, if $D=\{\sum_{i\in\Z}x_ia_i\in B\mid \sum_{i\in\Z}x_i=0\}$, then $B*B=D$.
\end{theorema}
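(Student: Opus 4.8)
The plan is to establish the two assertions separately, the linchpin being the identity $\lambda_{a_i}=\lambda_a$ for every $i\in\Z$. This follows from Condition~\ref{en-ST-4} of Proposition~\ref{prop-3.13}, which is available because $A^{(3)}=\{0\}$: taking $x=a^i$ and $y=a$ in $\lambda_{\lambda_x(y)}=\lambda_y$ and recalling $a_i=\lambda_{a^i}(a)$ gives $\lambda_{a_i}=\lambda_a$.

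First I would check that $B$ is a subbrace. It is plainly an additive subgroup of $A$, being the subgroup generated by $\{a_i\mid i\in\Z\}$, and it contains $0$. For $u=\sum_i x_ia_i\in B$ set $n=\sum_i x_i$; since $A^{(3)}=\{0\}$, the map $\lambda$ is a homomorphism from $(A,{+})$ to $\Aut(A,{+})$ (Condition~\ref{en-ST-3}), so, using $\lambda_{a_i}=\lambda_a$, we get $\lambda_u=\lambda_a^{\,n}=\lambda_{a^n}$. Hence for $v=\sum_j y_ja_j\in B$, using the identity $\lambda_{a^n}(a_j)=a_{j+n}$ recorded in the statement,
\[
  uv=u+\lambda_u(v)=\sum\nolimits_i x_ia_i+\sum\nolimits_j y_ja_{j+n}\in B,
\]
and likewise $u^{-1}=\lambda_u^{-1}(-u)=\lambda_{a^{-n}}(-u)=-\sum_i x_ia_{i-n}\in B$. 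Thus $B$ is closed under multiplication and inversion, hence a subbrace; since $a=a_0\in B$, the subbrace generated by $a$ is contained in $B$. Conversely, any subbrace containing $a$ contains all multiplicative powers $a^i$, hence all $a_i=-a^i+a^{i+1}$, and, being additively closed, every finite $\Z$-linear combination of the $a_i$; so it contains $B$, and the two coincide.

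Next I would prove $B*B=D$. First, $D$ is an additive subgroup of $B$: if $w=\sum_i x_ia_i$ and $w'=\sum_i x'_ia_i$ with $\sum_i x_i=\sum_i x'_i=0$, then $w-w'=\sum_i(x_i-x'_i)a_i$ with $\sum_i(x_i-x'_i)=0$, and $0\in D$. With the notation above,
\[
  u*v=\lambda_u(v)-v=\sum\nolimits_j y_j(a_{j+n}-a_j),
\]
a $\Z$-combination of the $a_k$ whose coefficients sum to $0$; hence $u*v\in D$, and so $B*B\subseteq D$. For the opposite inclusion I would express an arbitrary $w=\sum_i x_ia_i\in D$, with $\sum_i x_i=0$, through the telescoping identity $w=-\sum_i S_i(a_{i+1}-a_i)$, where $S_i=\sum_{j\le i}x_j$; the hypothesis $\sum_i x_i=0$ is precisely what makes the family $(S_i)_i$ finitely supported, so this sum is finite, and since $a_{i+1}-a_i=\lambda_a(a_i)-a_i=a_0*a_i\in B*B$ we obtain $w\in B*B$. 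Therefore $B*B=D$.

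I do not expect a genuine obstacle once $\lambda_{a_i}=\lambda_a$ and the multiplication and star formulas above are established; the hypothesis $A^{(3)}=\{0\}$ is used exactly there, through Conditions~\ref{en-ST-4} and~\ref{en-ST-3} of Proposition~\ref{prop-3.13}. The one point that needs care is that the $a_i$ need not be additively independent, so the integer $n=\sum_i x_i$ is attached to a chosen representation $\sum_i x_ia_i$ rather than to the element of $B$ itself; all the displayed computations are nonetheless valid for any representation, and the cleanest way to organise matters is to pass through the surjection $\bigoplus_{i\in\Z}\Z\to B$, $e_i\mapsto a_i$, together with the augmentation map $\bigoplus_{i\in\Z}\Z\to\Z$, so that $D$ is exhibited as the image of the augmentation ideal (which is generated by the elements $e_{i+1}-e_i$), making both inclusions $B*B\subseteq D$ and $D\subseteq B*B$ transparent.
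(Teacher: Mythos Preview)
Your proof is correct and follows the same line as the paper's: establish $\lambda_{a_i}=\lambda_a$, deduce $\lambda_u=\lambda_a^{\,n}$ for $u=\sum_ix_ia_i$ with $n=\sum_ix_i$, and use this to verify closure under product and inverse and to compute $u*v$ explicitly. The only cosmetic differences are that the paper obtains $\lambda_{a_i}=\lambda_a$ from Condition~\ref{en-ST-3} applied to $a_i=-a^i+a^{i+1}$ rather than from Condition~\ref{en-ST-4}, and for the inclusion $D\subseteq B*B$ it uses the shorter rewriting $\sum_ix_ia_i=\sum_ix_i(a_i-a_0)=\sum_ix_i(a^i*a)$ in place of your telescoping sum.
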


% We give an alternative proof of Theorem~\ref{th-subbrace-1gen}. 
% As an application of Theorem~\ref{th-mod-brace}, we can recover the construction of a free one-generated left brace $C$ with $C^{(3)}=\{0\}$ given by Rump \cite{Rump20}. 

\begin{theorema}[see Rump {\cite{Rump20}}]\label{th-free-A(3)}
  Let $C$ be a free abelian group with basis $\{c_i\mid i\in\Z\}$. Given $x\in C$, then $x=\sum_{i\in \Z}x_ic_i$ with $x_i\in\Z$ for all $i\in\Z$ and $x_i\ne 0$ for a finite number of $i\in\Z$. We define an action of $x\in C$ on $C$ by means of $\lambda_x(y)=\sum_{i\in\Z}y_ic_{i+\sum_{j\in \Z}x_j}=\sum_{i\in\Z}y_{i-\sum_{j\in\Z}x_j}c_i$ for $y=\sum_{i\in\Z}y_ic_i$. Then $(C, {+}, {\cdot})$ with the multiplication given by $xy=x+\lambda_x(y)$ for $x$, $y\in C$ is a left brace generated by $a=c_0$ with $C^{(3)}=\{0\}$ and $C^j\ne \{0\}$ for all $j\in\N$. Moreover, if $A$ is a left brace with $A^{(3)}=\{0\}$ and $b\in A$, then there exists a unique left brace epimorphism $\alpha$ from $C$ and the subbrace of $A$ generated by $b$ such that $\alpha(c_0)=b$.
\end{theorema}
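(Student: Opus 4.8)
The plan is to obtain the brace structure from Theorem~\ref{th-mod-brace}, to read off the generator and the lower $*$-series from Theorems~\ref{th-mod-brace} and~\ref{th-subbrace-1gen}, and then to construct the universal map by hand and check it is a brace homomorphism.

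First I would set up the bookkeeping. Write $\sigma(x)=\sum_{i\in\Z}x_i$ for $x=\sum_i x_ic_i\in C$; this is an additive homomorphism $\sigma\colon(C,{+})\to(\Z,{+})$, and by construction $\lambda_x$ is the automorphism of $(C,{+})$ that shifts the basis by $\sigma(x)$, i.e.\ $\lambda_x(c_i)=c_{i+\sigma(x)}$. From $\sigma(x+y)=\sigma(x)+\sigma(y)$ it follows that $\lambda_{x+y}=\lambda_x\circ\lambda_y$, so $\lambda$ is an action of $(C,{+})$ on itself; and since $\lambda_y$ merely permutes basis vectors, $\sigma(\lambda_y(x))=\sigma(x)$, whence $\lambda_{\lambda_y(x)}=\lambda_x$. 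Theorem~\ref{th-mod-brace} then gives that $(C,{+},{\cdot})$ is a left brace with $C^{(3)}=\{0\}$. The computation $\sigma(xy)=\sigma(x)+\sigma(\lambda_x(y))=\sigma(x)+\sigma(y)$ shows that $\sigma$ is moreover a homomorphism $(C,{\cdot})\to(\Z,{+})$, so $\sigma(a^i)=i$ for $a=c_0$ and hence $a_i=\lambda_{a^i}(a)=\lambda_{a^i}(c_0)=c_{0+\sigma(a^i)}=c_i$. Theorem~\ref{th-subbrace-1gen} applied to $C$ and $a=c_0$ then shows that the subbrace generated by $c_0$ is the whole of $C$.

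For the non-triviality of the powers $C^j$, let $\tau\in\Aut(C,{+})$ be the shift $c_i\mapsto c_{i+1}$, so that $\lambda_x=\tau^{\sigma(x)}$ and $x*y=\tau^{\sigma(x)}(y)-y$. For every $n\in\Z$ the endomorphism $\tau^n-\id$ can be written as $(\tau-\id)\circ r$ with $r$ a $\Z$-linear combination of powers of $\tau$; since such an $r$ commutes with $(\tau-\id)^{j-1}$, and since $c_0*y=(\tau-\id)(y)$ already sweeps out all of $(\tau-\id)(C)$, a straightforward induction on $j$ gives $C^{j}=(\tau-\id)^{j-1}(C)$. The coefficient of $c_{j-1}$ in $(\tau-\id)^{j-1}(c_0)$ equals $1$, so this element is nonzero and $C^j\ne\{0\}$.

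It remains to prove the universal property. Fix a left brace $A$ with $A^{(3)}=\{0\}$ and $b\in A$, put $b_i=\lambda_{b^i}(b)$, and let $\alpha\colon C\to A$ be the additive homomorphism determined by $\alpha(c_i)=b_i$. By Theorem~\ref{th-subbrace-1gen} applied to $A$ and $b$, the image of $\alpha$ is exactly the subbrace of $A$ generated by $b$, so $\alpha$ is an epimorphism onto it, and clearly $\alpha(c_0)=b$. Since $\alpha$ is additive and $uv=u+\lambda_u(v)$ holds in every left brace, $\alpha$ is a brace homomorphism provided $\alpha(\lambda_x(y))=\lambda_{\alpha(x)}(\alpha(y))$ for all $x,y\in C$. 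On the one hand, from $\lambda_x(y)=\sum_i y_ic_{i+\sigma(x)}$ and $\lambda_{b^{\sigma(x)}}(b_i)=\lambda_{b^{\sigma(x)}}\lambda_{b^i}(b)=\lambda_{b^{i+\sigma(x)}}(b)=b_{i+\sigma(x)}$ we get $\alpha(\lambda_x(y))=\lambda_{b^{\sigma(x)}}(\alpha(y))$. On the other hand, Proposition~\ref{prop-3.13} (valid since $A^{(3)}=\{0\}$) gives that $\lambda$ is a homomorphism $(A,{+})\to\Aut(A,{+})$ and that $\lambda_{b_i}=\lambda_{\lambda_{b^i}(b)}=\lambda_b$ for every $i$; hence $\lambda_{\alpha(x)}=\lambda_{\sum_i x_ib_i}=\prod_i\lambda_{b_i}^{x_i}=\lambda_b^{\sigma(x)}=\lambda_{b^{\sigma(x)}}$, and the two sides coincide. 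Finally, any brace homomorphism $\beta$ with $\beta(c_0)=b$ satisfies $\beta(a^i)=b^i$ and $\beta(c_i)=\beta(\lambda_{a^i}(a))=\lambda_{b^i}(b)=b_i=\alpha(c_i)$, so $\beta=\alpha$ by additivity, which gives uniqueness. The one step that genuinely uses the hypothesis $A^{(3)}=\{0\}$, and is therefore the heart of the argument, is the identity $\lambda_{\alpha(x)}=\lambda_{b^{\sigma(x)}}$; it rests on the equalities $\lambda_{b_i}=\lambda_b$ and on the additivity of $\lambda$ on $(A,{+})$, both supplied by Proposition~\ref{prop-3.13}, while everything else is routine manipulation with the shift $\tau$ and with the description of one-generated subbraces in Theorem~\ref{th-subbrace-1gen}.
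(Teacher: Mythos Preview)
Your proof is correct and follows the paper's overall strategy: apply Theorem~\ref{th-mod-brace} to get the brace structure, identify $a_i=c_i$ and invoke Theorem~\ref{th-subbrace-1gen} to see that $c_0$ generates $C$, exhibit a nonzero element of each $C^j$, and define the universal map on the basis by $c_i\mapsto b_i$. Your treatment of the universal property is in fact more explicit than the paper's, which simply asserts that the assignment $\sum x_ic_i\mapsto\sum x_ib_i$ is a brace epimorphism on the strength of Theorem~\ref{th-subbrace-1gen}; you spell out the intertwining $\alpha\circ\lambda_x=\lambda_{\alpha(x)}\circ\alpha$ using Proposition~\ref{prop-3.13}, which is exactly the point that deserves justification.

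The one place where you take a genuinely different route is the non-triviality of $C^j$. The paper produces a single witness by induction: setting $s_1=a$ and $s_{j+1}=a*s_j$, it proves $s_j=\sum_{k=0}^{j-1}(-1)^k\binom{j-1}{k}c_{j-1-k}\ne 0$. You instead establish the structural identity $C^j=(\tau-\id)^{j-1}(C)$ via the factorisation $\tau^n-\id=(\tau-\id)r$ with $r\in\Z[\tau,\tau^{-1}]$, and then observe that $(\tau-\id)^{j-1}(c_0)$ has leading coefficient~$1$. Your element is of course the paper's $s_j$, so the witnesses coincide; but your argument yields the extra information that $C^j$ is precisely the image of $(\tau-\id)^{j-1}$, whereas the paper only shows $C^j\ne\{0\}$. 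Both approaches are short; yours is slightly more conceptual, the paper's slightly more elementary.
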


Rump's constructions rely on other algebraic structures, like cycle sets or q-braces. We recover Rump's results with a different approach that strongly depend on Proposition~\ref{prop-3.13}  and Theorem~\ref{th-mod-brace} (Sections~\ref{sec-A(3)} and~\ref{sec-A(3)-free}).

%respectively, based on the general construction of right nilpotent left braces of right nilpotency class at most~$2$ of Theorem~\ref{th-mod-brace} from a suitable actions of an abelian group on itself. The fact that this construction generates a right nilpotent left brace of class at most~$2$ depends on Proposition~\ref{prop-3.13} and other preliminary results that we show in Section~\ref{sec-prelim}.

The special case of Smoktunowicz-nilpotent left braces $A$ with $A^{(3)}=\{0\}$ and $A^{m+1}=\{0\}$ for a natural number $m\ge 2$ becomes particularly relevant. Of course, we can apply the descriptions of the subbrace $S$ generated by one element  corresponding to $A^{(3)}=\{0\}$ and the free one-generated left brace (Theorems~\ref{th-subbrace-1gen} and~\ref{th-free-A(3)}) to this case, but in this case it seems desirable to have a description that takes into account the left nilpotency class and the terms $S^r$ for all natural numbers $r$ become apparent from the description. An adaptation of the previous description to the subbrace generated by an element in these left braces appears in Section~\ref{sec-Smoktunowicz}. We prove:
\begin{theorema}\label{th-brace-1gen}
  Let $A$ be a left brace such that $A^{(3)}=A^{m+1}=\{0\}$ for a natural number~$m\ge 2$ and let $a\in A$. Define $a_1=a$, $a_{j+1}=a*a_j$ for $j\ge 1$. Then the set
  \[S=\Bigl\{\sum_{k=1}^mt_ka_k\mid t_k\in \mathbb{Z}, 1\le k\le m\Bigr\}\]
  is the subbrace of $A$ generated by~$a$.
\end{theorema}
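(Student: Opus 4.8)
The plan is to check that $S$ satisfies the three defining properties of the subbrace generated by $a$: that $a\in S$, that $S$ is a subbrace of $A$, and that $S$ is contained in every subbrace of $A$ that contains $a$. The first is clear since $a=a_1$. For the third, if $T$ is a subbrace with $a\in T$, then $a_2=a*a=-a+a\cdot a-a\in T$ because $T$ is closed under both operations and under additive inverses; inductively $a_{j+1}=a*a_j=-a+a\cdot a_j-a_j\in T$, so $T$ contains every $a_k$ and hence $S\subseteq T$. Since $S$ is by construction the additive subgroup of $(A, {+})$ generated by $a_1,\dots,a_m$ (in particular $0\in S$), what remains is to show that $S$ is closed under the multiplication and under multiplicative inversion.

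First I would collect the structural consequences of the hypotheses. Since $A^{(3)}=\{0\}$, Proposition~\ref{prop-3.13} tells us that $A^{(2)}\subseteq\Ker\lambda$ and that $\lambda$ is a homomorphism of $(A, {+})$ into $\Aut(A, {+})$. From the recursion, $a_j\in A^j$ for all $j$; hence $a_j\in A^2=A^{(2)}\subseteq\Ker\lambda$, so $\lambda_{a_j}=\id$ whenever $j\ge 2$, while $a_{m+1}=a*a_m\in A^{m+1}=\{0\}$ forces $a_j=0$ for every $j>m$. Combining $\lambda_{a_j}=\id$ for $j\ge 2$ with the additivity of $\lambda$ gives, for any $u=\sum_{k=1}^m s_ka_k\in S$, the identity $\lambda_u=\lambda_{a_1}^{\,s_1}=\lambda_a^{\,s_1}$.

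The core of the argument is then the claim that $\lambda_a$ restricts to an automorphism of the additive group $(S, {+})$. That $\lambda_a(S)\subseteq S$ is immediate from $\lambda_a(a_j)=a_j+a_{j+1}$ (a rewriting of $a*a_j=a_{j+1}$) together with $a_{m+1}=0$. For surjectivity I would argue by descending induction on $j$: $a_m=\lambda_a(a_m)-a_{m+1}=\lambda_a(a_m)\in\lambda_a(S)$, and once $a_{j+1}\in\lambda_a(S)$ we get $a_j=\lambda_a(a_j)-a_{j+1}\in\lambda_a(S)$. Hence $\lambda_a|_S\in\Aut(S, {+})$, so for every $u\in S$ both $\lambda_u(S)=\lambda_a^{\,s_1}(S)=S$ and $\lambda_u^{-1}(S)=S$. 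Now closure follows: for $u,v\in S$ we have $uv=u+\lambda_u(v)\in S+\lambda_u(S)=S$, and since the multiplicative identity of $A$ is $0\in S$, the inverse of $u\in S$ satisfies $0=uu^{-1}=u+\lambda_u(u^{-1})$, whence $u^{-1}=\lambda_u^{-1}(-u)\in\lambda_u^{-1}(S)=S$. Thus $S$ is a subbrace, and by the first paragraph it is the subbrace generated by~$a$.

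The step that needs genuine care — and where the hypothesis $A^{m+1}=\{0\}$ is indispensable — is the surjectivity $\lambda_a(S)=S$: the map $\lambda_a$ acts on the $a_j$ in an ``infinite unitriangular'' fashion, and only the vanishing of $a_j$ for large $j$ allows one to invert it inside the finitely generated group $S$. This is precisely the phenomenon that, in the absence of left nilpotency, forces the infinite-rank descriptions of Theorems~\ref{th-subbrace-1gen} and~\ref{th-free-A(3)}.
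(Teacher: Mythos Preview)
Your proof is correct and takes a genuinely different route from the paper's. The paper proves closure of $S$ under~$*$ by invoking Proposition~\ref{prop-star-int}, which in turn rests on the explicit binomial-coefficient formula $(na)*a_j=\sum_{k=1}^{m-j}\binom{n}{k}a_{k+j}$ established for all integers $n$ via Propositions~\ref{prop-1.8} and~\ref{prop-minus-1}; closure under multiplicative inversion is then obtained by writing down an explicit candidate $t^{-1}=\sum_{j}\bigl(-\sum_{k}\binom{-t_1}{k}t_{j-k}\bigr)a_j$ and checking $nt=0$ by another binomial computation. You sidestep all of this combinatorics with the single qualitative observation that $\lambda_a$ restricts to an automorphism of the additive group $S$: injectivity is inherited from $\lambda_a\in\Aut(A,{+})$, and surjectivity follows from the short descending induction $a_m=\lambda_a(a_m)$, $a_j=\lambda_a(a_j)-a_{j+1}$, which is exactly where the hypothesis $A^{m+1}=\{0\}$ enters. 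From $\lambda_u=\lambda_a^{s_1}$ both closures are then immediate. Your argument is shorter and more conceptual for the purpose of Theorem~\ref{th-brace-1gen} alone; the paper's explicit formulas, however, are not wasted effort, since they are reused in Section~\ref{sec-Smoktunowicz-free} to verify the multiplication table of the free object $B_m$ and to identify the epimorphism in Theorem~\ref{th-Smoktunowicz-free}.
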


The construction of the free one-generated Smoktunowicz-nilpotent left braces with right nilpotency class two and a given left nilpotency class is given in Section~\ref{sec-Smoktunowicz-free}. We prove:
\begin{theorema}\label{th-Smoktunowicz-free}
  Let $m\ge 2$ be a natural number. Consider in $B_m=\overbrace{\Z\times\dots\times \Z}^{(m)}$ the usual addition given by
  \[(n_1,\dots, n_m)+(t_1,\dots, t_m)=(n_1+t_1,\dots, n_m+t_m)\]
  and the multiplication
  \[(n_1,\dots, n_m)(t_1,\dots, t_m)=(w_1,\dots, w_m)\]
  where, for $1\le i\le m$,
  \[w_i=n_i+\sum_{k=0}^{i-1}\binom{n_1}{k}t_{i-k}.\]
  Then $(B, {+}, {\cdot})$ is a left brace with $B_m^{(3)}=B_m^{m+1}=\{0\}$, $B_m^{m}\ne \{0\}$; $B$ is generated by $\mathbf{b}=(1,0,\dots, 0)$, and given a left brace $(A, {+}, {\cdot})$ with $A^{(3)}=A^{m+1}=0$ and $a\in A$, there exists a unique left brace homomorphism from $B_m$ to $A$ mapping $\mathbf{b}$ to $a$ and whose image is the subbrace of~$A$ generated by $a$.
\end{theorema}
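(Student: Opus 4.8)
The plan is to realise $(B_m, {+}, {\cdot})$ as an instance of the construction of Theorem~\ref{th-mod-brace} and then read off all the stated properties. First I would identify the $\lambda$-action hidden in the given multiplication: for $x=(n_1,\dots,n_m)$ the map $\lambda_x$ depends only on the first coordinate, namely $\lambda_x=\mu_{n_1}$, where for $n\in\Z$ the map $\mu_n$ sends $(t_1,\dots,t_m)$ to $(v_1,\dots,v_m)$ with $v_i=\sum_{k=0}^{i-1}\binom{n}{k}t_{i-k}$. Three routine checks are then needed: that $\mu_n\in\Aut(B_m, {+})$ (its matrix in the standard basis $e_1,\dots,e_m$ is lower unitriangular, hence invertible over $\Z$, with inverse $\mu_{-n}$); that $n\mapsto\mu_n$ is a group homomorphism $(\Z, {+})\longrightarrow\Aut(B_m, {+})$, which amounts precisely to the Vandermonde identity $\binom{n+n'}{k}=\sum_{j}\binom{n}{j}\binom{n'}{k-j}$; and that the invariance hypothesis $\lambda_{\lambda_y(x)}=\lambda_x$ of Theorem~\ref{th-mod-brace} holds, which is immediate since $\mu_{y_1}$ leaves the first coordinate of $x$ unchanged (indeed $v_1=t_1$ always). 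As $xy=x+\lambda_x(y)$ unwinds to exactly the formula defining $(w_1,\dots,w_m)$, Theorem~\ref{th-mod-brace} yields that $(B_m, {+}, {\cdot})$ is a left brace with $B_m^{(3)}=\{0\}$.

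Next I would compute the powers $B_m^r$. From $\lambda_{\mathbf{b}}=\mu_1$ one gets $\lambda_{\mathbf{b}}(e_j)=e_j+e_{j+1}$ (with $e_{m+1}:=0$), so that $\mathbf{b}*y=(0,y_1,\dots,y_{m-1})$ is the shift. Writing $E_r=\langle e_r,\dots,e_m\rangle$, I would prove $B_m^r=E_r$ for $1\le r\le m$ by induction: the inclusion $B_m*E_r\subseteq E_{r+1}$ holds because $\mu_n$ does not change the first $r$ coordinates of an element of $E_r$, while $\mathbf{b}*e_j=e_{j+1}$ gives the reverse inclusion. The same computation gives $B_m*E_m=\{0\}$, hence $B_m^{m+1}=\{0\}$ while $B_m^m=\langle e_m\rangle\ne\{0\}$. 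For the generation claim, the subbrace generated by $\mathbf{b}$ is closed under the star operation, so it contains $a_1=\mathbf{b}=e_1$ and then successively $a_{j+1}=\mathbf{b}*a_j=e_{j+1}$; thus it contains every $e_j$ and therefore all of $B_m$.

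For the universal property, let $A$ be a left brace with $A^{(3)}=A^{m+1}=\{0\}$ and let $a\in A$; set $a_1=a$ and $a_{j+1}=a*a_j$. Since $a_j\in A^j$, we get $a_j=0$ for $j>m$ and $a_j\in A^2=A^{(2)}\subseteq\Ker\lambda$ for $j\ge2$ by Proposition~\ref{prop-3.13}; and by Theorem~\ref{th-brace-1gen} the subbrace $S$ generated by $a$ equals $\{\sum_{k=1}^m t_ka_k\mid t_k\in\Z\}$. As $(B_m, {+})$ is free abelian on $e_1,\dots,e_m$, the rule $e_k\mapsto a_k$ extends to a well-defined surjective additive homomorphism $\phi\colon B_m\longrightarrow S$. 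The crucial point is that $\phi$ intertwines the two $\lambda$-actions: from $\lambda_{\mathbf{b}}(e_j)=e_j+(\mathbf{b}*e_j)=e_j+e_{j+1}$, $\lambda_a(a_j)=a_j+(a*a_j)=a_j+a_{j+1}$ and additivity one checks $\phi\circ\lambda_{\mathbf{b}}=\lambda_a\circ\phi$, hence $\phi\circ\lambda_{\mathbf{b}}^{\,n}=\lambda_a^{\,n}\circ\phi$ for all $n\in\Z$. Since $\lambda_x=\lambda_{\mathbf{b}}^{\,x_1}$ in $B_m$ (the action depends only on $x_1$ and $n\mapsto\mu_n$ is a homomorphism), while $\lambda_{\phi(x)}=\lambda_a^{\,x_1}$ in $A$ (here Proposition~\ref{prop-3.13} enters again: $\lambda$ is additive and trivial on $a_2,a_3,\dots$, and $\phi(x)$ has $a_1$-coefficient $x_1$), it follows that $\phi(\lambda_x(y))=\lambda_{\phi(x)}(\phi(y))$ and therefore $\phi(xy)=\phi(x)+\phi(\lambda_x(y))=\phi(x)+\lambda_{\phi(x)}(\phi(y))=\phi(x)\phi(y)$. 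Thus $\phi$ is a brace homomorphism with $\phi(\mathbf{b})=a$ and image $S$. Uniqueness is forced: any brace homomorphism $\psi$ with $\psi(\mathbf{b})=a$ satisfies $\psi(e_{j+1})=\psi(\mathbf{b}*e_j)=a*\psi(e_j)$, so inductively $\psi(e_j)=a_j=\phi(e_j)$ for every $j$, whence $\psi=\phi$.

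I expect the main obstacle to be obtaining the reduction of both $\lambda$-actions to a single one-parameter unipotent family: on the $B_m$ side this is the identity $\lambda_{(n_1,\dots,n_m)}=\lambda_{\mathbf{b}}^{\,n_1}$, and on the $A$ side it is $\lambda_{\sum s_k a_k}=\lambda_a^{\,s_1}$, which genuinely relies on Proposition~\ref{prop-3.13} (that $A^{(3)}=\{0\}$ makes $\lambda$ a homomorphism from the additive group and forces $\lambda$ to vanish on $A^{(2)}$, to which $a_2,a_3,\dots$ belong). Once this collapse is in place the verification that $\phi$ is a brace homomorphism is short, despite the fact that the star operation is not additive in its first argument in general. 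The remaining ingredients --- the automorphism property of $\mu_n$, the Vandermonde identity, and the induction computing $B_m^r$ --- are elementary manipulations with binomial coefficients.
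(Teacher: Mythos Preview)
Your proposal is correct and follows essentially the same route as the paper: both realise $B_m$ via Theorem~\ref{th-mod-brace} by checking that $\lambda$ depends only on the first coordinate, is an additive homomorphism via the Vandermonde identity, and satisfies the invariance condition, and then compute $\mathbf{b}_j=e_j$ to obtain generation and the left-nilpotent series. Your verification of the universal property via the $\lambda$-intertwining argument $\phi\circ\lambda_{\mathbf b}^{\,n}=\lambda_a^{\,n}\circ\phi$ is more explicit than the paper's (which simply asserts that $(n_1,\dots,n_m)\mapsto\sum_k n_ka_k$ is a brace epimorphism), and you compute $B_m^r=E_r$ directly rather than quoting Theorem~\ref{th-brace-1gen} and Remark~\ref{rem-left}, but these are differences of packaging rather than of method.
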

The construction of this free left brace also appears as a particular case of Theorem~\ref{th-mod-brace}.

\section{Preliminary results}\label{sec-prelim}

For the sake of completeness, we present here a proof of Proposition~\ref{prop-3.13}.

\begin{proof}[Proof of Proposition~\ref{prop-3.13}]
  \emph{\ref{en-ST-1} implies~\ref{en-ST-1b}}. Suppose that $z\in A^{(2)}$ and $a\in A$. Then $z*a=A^{(3)}=\{0\}$.

  \emph{\ref{en-ST-1b} implies~\ref{en-ST-2}}. Let $z\in A^{(2)}$ and $a\in A$. Since $\lambda_z(a)-a=z*a=0$, we have that $\lambda_z(a)=a$. This implies that $\lambda_z=\id$ and so $z\in \Ker\lambda$.

% \emph{\ref{en-ST-1} implies~\ref{en-ST-2}}. Suppose that $A^{(3)}=\{0\}$. Given $a$, $b$, $c\in A$, we have that $0=(a*b)*c=\lambda_{a*b}(c)-c$, what implies that $\lambda_{a*b}(c)=c$. Consequently, $a*b\in \Ker\lambda$. Since $\Ker \lambda=\Soc A$, the socle of $A$, is an ideal of $A$ by a result of Rump (see \cite[Proposition~2.15]{Cedo18}), we have that $A^{(2)}$, that is an ideal of $A$, is also contained in $\Ker\lambda$.

  \emph{\ref{en-ST-2} implies~\ref{en-ST-3}}. Since $ab=a*b+a+b=(a*b)\lambda_{a*b}^{-1}(a+b)$, we have that $\lambda_{a}\circ\lambda_b=\lambda_{ab}=\lambda_{(a*b)\lambda_{a*b}^{-1}(a+b)}=\lambda_{a*b}\circ\lambda_{\lambda_{a*b}^{-1}(a+b)}=\lambda_{a+b}$ and so $\lambda\colon (A, {+})\longrightarrow \Aut(A, {+})$ is a group homomorphism.

  \emph{\ref{en-ST-3} implies~\ref{en-ST-4}}. We have that $\lambda_a\circ\lambda_b=\lambda_{ab}=\lambda_{a+\lambda_a(b)}=\lambda_a\circ\lambda_{\lambda_a(b)}$. Therefore, $\lambda_b=\lambda_a(b)$.

  \emph{\ref{en-ST-4} implies~\ref{en-ST-2}}. Let $a$, $b\in A$. We have that
  \begin{align*}
    \lambda_{a*b}&=\lambda_{\lambda_a(b)-b}=\lambda_{\lambda_a(b)\lambda^{-1}_{\lambda_a(b)}(-b)}=\lambda_{\lambda_a(b)\lambda_b^{-1}(-b)}=\lambda_{\lambda_a(b)}\circ\lambda_{\lambda_b^{-1}(-b)}\\
                            &=\lambda_b\circ\lambda_{\lambda_b^{-1}}=\lambda_{b\lambda_b^{-1}(-b)}=\lambda_{b-b}=\lambda_0=\id.                              
  \end{align*}
  Therefore, $a*b\in \Ker\lambda$. Since $\Ker\lambda=\Soc A$ is an ideal of $A$, we have that $A^{(2)}\subseteq \Ker\lambda$.

  \emph{\ref{en-ST-2} implies~\ref{en-ST-1}}. Suppose that $A^{(2)}\subseteq \Ker\lambda$. It is enough to show that all generators of $A^{(3)}$, that have the form $d*e$ for $d\in A^{(2)}$ and $e\in A$, are zero. Since $A^{(2)}\subseteq \Ker \lambda$, we have that $d*e=\lambda_d(e)-e=e-e=0$ if $d\in A^{(2)}$ and $e\in A$. Therefore, $A^{(3)}=\{0\}$.
\end{proof}

The lambda map of a skew left brace is always an action from the multiplicative group of the left brace on the additive group. Our construction of left braces in Theorem~\ref{th-mod-brace} is based on an action from the additive group on itself. The following result shows the relation between both actions.
\begin{proposition}
  Let $(A, {+})$ be an abelian group, $\lambda\colon A\longrightarrow \Aut(A, {+})$ and define a binary operation $\cdot$ on $A$ by means of $ab=a+\lambda_a(b)$ (this happens when $\lambda$ is the map defined on a left brace $(A, {+}, {\cdot})$ by $\lambda_a(b)=-a+ab$ for $a$, $b\in A$). If two of the following three statements hold, then so does the other one:
  \begin{enumerate}
  \item $\lambda_{a+b}=\lambda_a\circ\lambda_b$ for every $a$, $b\in A$ (that is, $\lambda\colon (A, {+})\longrightarrow \Aut(A, {+})$ is a group homomorphism).\label{en-21-1}
  \item $\lambda_{ab}=\lambda_a\circ\lambda_b$ for every $a$, $b\in A$.\label{en-21-2}
  \item $\lambda_{\lambda_a(b)}=\lambda_b$ for every $a$, $b\in A$.\label{en-21-3}
  \end{enumerate}
\end{proposition}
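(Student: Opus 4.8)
The plan is to exploit two elementary identities that follow at once from the definition of the product, together with the fact that each $\lambda_a$ is an automorphism and hence cancellable. From $ab=a+\lambda_a(b)$ we get, for all $a$, $b\in A$, the companion identity $a+b=a\cdot\lambda_a^{-1}(b)$, since $a\cdot\lambda_a^{-1}(b)=a+\lambda_a(\lambda_a^{-1}(b))=a+b$. Thus every sum in $(A,{+})$ can be rewritten as a product and conversely; this is exactly what lets us pass between Statements~\ref{en-21-1} and~\ref{en-21-2} once Statement~\ref{en-21-3} is available.

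First I would show that \ref{en-21-1} and \ref{en-21-2} together imply \ref{en-21-3}. Fix $a$, $b\in A$. Using $ab=a+\lambda_a(b)$ and Statement~\ref{en-21-1}, $\lambda_{ab}=\lambda_{a+\lambda_a(b)}=\lambda_a\circ\lambda_{\lambda_a(b)}$; using Statement~\ref{en-21-2}, $\lambda_{ab}=\lambda_a\circ\lambda_b$. Comparing these and pre-composing with $\lambda_a^{-1}$ gives $\lambda_{\lambda_a(b)}=\lambda_b$, which is Statement~\ref{en-21-3}. Next, \ref{en-21-1} and \ref{en-21-3} together imply \ref{en-21-2} by the direct computation $\lambda_{ab}=\lambda_{a+\lambda_a(b)}=\lambda_a\circ\lambda_{\lambda_a(b)}=\lambda_a\circ\lambda_b$, where the middle equality is Statement~\ref{en-21-1} and the last is Statement~\ref{en-21-3}.

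Finally, \ref{en-21-2} and \ref{en-21-3} together imply \ref{en-21-1}, and this is the step that needs a small trick. Given $a$, $b\in A$, the identity $a+b=a\cdot\lambda_a^{-1}(b)$ and Statement~\ref{en-21-2} give $\lambda_{a+b}=\lambda_{a\cdot\lambda_a^{-1}(b)}=\lambda_a\circ\lambda_{\lambda_a^{-1}(b)}$. Now apply Statement~\ref{en-21-3} with $\lambda_a^{-1}(b)$ in place of $b$: $\lambda_{\lambda_a(\lambda_a^{-1}(b))}=\lambda_{\lambda_a^{-1}(b)}$, that is, $\lambda_b=\lambda_{\lambda_a^{-1}(b)}$. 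Substituting back yields $\lambda_{a+b}=\lambda_a\circ\lambda_b$, which is Statement~\ref{en-21-1}.

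The only genuine subtlety throughout is keeping in mind that $\lambda_a\in\Aut(A,{+})$ is invertible, so that both the cancellation in the first implication and the substitution $b\mapsto\lambda_a^{-1}(b)$ in the last are legitimate; beyond that, nothing more than unwinding the definitions is required, so I expect no real obstacle.
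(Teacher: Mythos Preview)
Your proof is correct and follows essentially the same route as the paper: all three implications hinge on rewriting $ab=a+\lambda_a(b)$ (equivalently $a+b=a\cdot\lambda_a^{-1}(b)$) and then cancelling or substituting via the invertibility of $\lambda_a$. The computations you give are line-for-line the paper's, only presented in a slightly different order.
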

\begin{proof}
  \emph{\ref{en-21-1} and~\ref{en-21-2} imply~\ref{en-21-3}}. Since $\lambda_a\circ\lambda_b=\lambda_{ab}=\lambda_{a+\lambda_a(b)}=\lambda_a\circ\lambda_{\lambda_a(b)}$, we conclude that $\lambda_b=\lambda_{\lambda_a(b)}$.

  \emph{\ref{en-21-2} and~\ref{en-21-3} imply~\ref{en-21-1}}. We have that
  $\lambda_{a+b}=\lambda_{a\lambda_a^{-1}(b)}=\lambda_a\circ\lambda_{\lambda_a^{-1}(b)}=\lambda_a\circ\lambda_{\lambda_a(\lambda_a^{-1}(b))}=\lambda_a\circ\lambda_b$.

  \emph{\ref{en-21-3} and~\ref{en-21-1} imply~\ref{en-21-2}}. We have that $\lambda_{ab}=\lambda_{a+\lambda_a(b)}=\lambda_a\circ\lambda_{\lambda_a(b)}=\lambda_a\circ\lambda_b$.
\end{proof}

Some of the results of Section~\ref{sec-Smoktunowicz} will be given in terms of generalised binomial coefficients of the form $\binom{n}{k}$, where $n$ is an integer, that could be eventually negative, and $k$ is a non-negative integer. They are defined by means of
\[\binom{n}{0}=1,\qquad\binom{n}{k}=\frac{n(n-1)\dotsm(n-k+1)}{k!}\quad \text{if $k>0$}.\]
Some of the well-known identities of the usual binomial coefficients are also satisfied by the generalised binomial coefficients. The next lemma summarises some of these identities (see, for example, \cite{Gould72}).
\begin{lemma}
  \begin{enumerate}
  \item   Let $n\in\mathbb{Z}$ and $k$ a non-negative integer. Then:
    \[\binom{n}{k}+\binom{n}{k+1}=\binom{n+1}{k+1}.\]
  \item (Chu-Vandermonde's Theorem) Let $n$ and $t$ be integer numbers and $r$ a non-negative integer. Then 
    \[\binom{n+t}{r}=\sum_{k=0}^r\binom{n}{k}\binom{t}{r-k}.\]
  \end{enumerate}
\end{lemma}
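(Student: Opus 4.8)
The plan is to prove the two identities in turn, establishing Pascal's rule first by a direct manipulation of the defining formula and then using it as the engine for an inductive proof of the Chu--Vandermonde identity. The only genuine subtlety is that the upper argument~$n$ is allowed to be a \emph{negative} integer, so the usual subset-counting arguments are not directly available; everything must be deduced from the algebraic definition.

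For the first identity I would argue straight from the definition. For every non-negative integer~$k$ one has $\binom{n}{k+1}=\binom{n}{k}\cdot\frac{n-k}{k+1}$, since the numerator of $\binom{n}{k+1}$ is that of $\binom{n}{k}$ multiplied by the extra factor $(n-k)$ while the denominator acquires the factor $(k+1)$ (the case $k=0$ reading $\binom{n}{1}=n=\binom{n}{0}\cdot n$). Factoring out $\binom{n}{k}$ gives
\[\binom{n}{k}+\binom{n}{k+1}=\binom{n}{k}\Bigl(1+\frac{n-k}{k+1}\Bigr)=\binom{n}{k}\cdot\frac{n+1}{k+1}=\binom{n+1}{k+1},\]
the last step being again immediate from the definition. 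This part is routine and makes no use of the sign of~$n$.

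For the second identity I would set $L(n,t,r)=\binom{n+t}{r}$ and $R(n,t,r)=\sum_{k=0}^r\binom{n}{k}\binom{t}{r-k}$ and put $E=L-R$. Applying the first identity to $\binom{(n+t-1)+1}{r}$ shows that $L$ satisfies the recurrence $L(n,t,r)=L(n-1,t,r-1)+L(n-1,t,r)$; applying the first identity to each factor $\binom{n}{k}$ inside $R$ and reindexing the resulting sums shows that $R$ obeys the \emph{same} recurrence, whence $E(n,t,r)=E(n-1,t,r-1)+E(n-1,t,r)$ for all $r\ge 1$. I would then record two base cases: $E(n,t,0)=1-1=0$ for all integers $n,t$, since both sides equal~$1$; and $E(0,t,r)=\binom{t}{r}-\binom{t}{r}=0$ for all $t$ and all~$r$, because $\binom{0}{k}=0$ for $k\ge 1$ leaves only the $k=0$ term of~$R$.

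The main obstacle is precisely that $n$ ranges over all of~$\Z$, so a single induction on~$n$ cannot anchor the negative values; I would resolve this by a two-directional induction anchored at $n=0$. The recurrence expresses the level $E(n,\cdot,\cdot)$ through the level $E(n-1,\cdot,\cdot)$, so an upward induction on~$n$ starting from the base $n=0$ yields $E\equiv 0$ for all $n\ge 0$ and all $t,r$. For $n<0$ I would rewrite the recurrence as $E(n-1,t,r)=E(n,t,r)-E(n-1,t,r-1)$ and descend in~$n$, running at each level an auxiliary induction on~$r$ (with base $r=0$) so that both terms on the right are already known to vanish; this covers every negative~$n$ and completes the proof. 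A slicker alternative, which I would mention, is the polynomial-identity principle: for fixed~$r$ both $L$ and $R$ are polynomials in $(n,t)$ of total degree~$r$, they agree on all pairs of non-negative integers by the classical Vandermonde identity, and a polynomial vanishing at infinitely many integer points vanishes identically, so $L=R$ throughout~$\Z\times\Z$.
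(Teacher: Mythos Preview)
Your proof is correct. Both parts are handled soundly: the algebraic derivation of Pascal's rule works for arbitrary $n\in\Z$, and your treatment of Chu--Vandermonde via the shared recurrence $E(n,t,r)=E(n-1,t,r-1)+E(n-1,t,r)$ together with the two-directional induction anchored at $n=0$ is complete. The polynomial-identity alternative you mention is also valid and is in fact the standard way to extend such identities from $\N$ to $\Z$.

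As for comparison with the paper: there is nothing to compare. The paper does not prove this lemma at all; it simply states the two identities and refers the reader to the literature (Gould's tables of combinatorial identities). So your argument supplies a proof where the paper offers only a citation.
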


The following properties of the star operation can be found, for instance, in \cite[Lemma~2.1]{JespersVanAntwerpenVendramin22}. In the absence of parentheses, the star operation will be performed before the additions.
\begin{lemma}\label{lemma-4.2}
    Let $(A, {+}, {\cdot})$ be a left brace. Then, the following identities hold for $a$, $b$, $c\in A$:
  \begin{enumerate}
  \item $a*(b+c)=a*b+a*c$.
  \item $(ab)*c=a*(b*c)+b*c+a*c$.\label{en-lemma-4.2b}
  \end{enumerate}
\end{lemma}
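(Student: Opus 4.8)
The plan is to derive both identities directly from the expression $a*b=\lambda_a(b)-b$ for the star operation, using only two structural facts already available in the excerpt: each map $\lambda_a$ is an automorphism of the abelian group $(A, {+})$, hence additive, and $\lambda\colon (A, {\cdot})\longrightarrow\Aut(A, {+})$ is a homomorphism of the multiplicative group, so that $\lambda_{ab}=\lambda_a\circ\lambda_b$. Throughout I would use freely that $(A, {+})$ is commutative.

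For the first identity, I would expand $a*(b+c)=\lambda_a(b+c)-(b+c)$ and apply the additivity of $\lambda_a$ to obtain $\lambda_a(b)+\lambda_a(c)-b-c$; regrouping the four summands by commutativity into $(\lambda_a(b)-b)+(\lambda_a(c)-c)$ yields $a*b+a*c$. Alternatively, the same identity can be read off directly from the modified distributive law $a(b+c)=ab-a+ac$ by inserting it into $a*(b+c)=-a+a(b+c)-(b+c)$ and rearranging the six resulting summands into $(-a+ab-b)+(-a+ac-c)$, a second short route that does not invoke the additivity of $\lambda_a$ explicitly.

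For the second identity, the key is to transport $\lambda_{ab}$ through the multiplicativity of $\lambda$. I would start from $(ab)*c=\lambda_{ab}(c)-c$, rewrite $\lambda_{ab}(c)=\lambda_a(\lambda_b(c))$, and substitute $\lambda_b(c)=b*c+c$. Applying the additivity of $\lambda_a$ gives $\lambda_a(b*c)+\lambda_a(c)$, and a final pair of substitutions $\lambda_a(b*c)=a*(b*c)+(b*c)$ and $\lambda_a(c)=a*c+c$ produces $a*(b*c)+(b*c)+a*c+c$. Subtracting the outstanding $c$ then cancels the last summand and delivers exactly $a*(b*c)+b*c+a*c$.

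Neither step presents a genuine obstacle; both are short computations rather than arguments. The only point requiring care is the bookkeeping in the chain of substitutions for the second identity: one must apply the three instances of the defining relation $\lambda_x(y)=x*y+y$ in the correct order and track the single stray $c$ so that it cancels at the end rather than being dropped prematurely. The convention that the star operation is evaluated before the additions should also be kept in mind when parsing the right-hand sides.
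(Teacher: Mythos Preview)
Your proposal is correct: both identities follow from the additivity of each $\lambda_a$ together with the multiplicativity $\lambda_{ab}=\lambda_a\circ\lambda_b$, exactly as you outline. The paper does not actually supply its own proof of this lemma---it simply records the result and refers the reader to \cite[Lemma~2.1]{JespersVanAntwerpenVendramin22}---so there is nothing to compare against beyond noting that your argument is the standard one and would serve perfectly well as the omitted verification.
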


For left braces $A$ with $A^{(3)}=\{0\}$, we have the following result.

\begin{proposition}\label{prop-1.5}
  Let $A$ be a left brace such that $A^{(3)}=\{0\}$. Then, for every $x$, $y$, $z\in A$, we have that
  \[x*(z*y)+x*y+z*y=(x+z)*y=(xz)*y.\]
\end{proposition}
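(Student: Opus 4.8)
The plan is to derive the identity $x*(z*y)+x*y+z*y=(x+z)*y=(xz)*y$ by combining the two parts of Lemma~\ref{lemma-4.2} with the structural consequences of $A^{(3)}=\{0\}$ supplied by Proposition~\ref{prop-3.13}. The second equality, $(x+z)*y=(xz)*y$, is the easier one: since $xz=x+\lambda_x(z)$, it suffices to show $(x+z)*y=(x+\lambda_x(z))*y$, and by additivity of the star operation in the first argument (which follows from Lemma~\ref{lemma-4.2}(i) applied symmetrically, or directly from the definition) this reduces to $z*y=\lambda_x(z)*y$. Now $\lambda_x(z)=z+(x*z)$ with $x*z\in A^{(2)}$, and by Proposition~\ref{prop-3.13}\ref{en-ST-2} we have $A^{(2)}\subseteq\Ker\lambda$; hence $(x*z)*y=\lambda_{x*z}(y)-y=y-y=0$, so $\lambda_x(z)*y=(z+(x*z))*y=z*y+(x*z)*y=z*y$, as required. (Strictly, I should check additivity of $*$ in the first argument; this is standard and can be quoted, but it also falls out of part~\ref{en-lemma-4.2b} of Lemma~\ref{lemma-4.2} together with $A^{(2)}\subseteq\Ker\lambda$ if one prefers a self-contained route.)

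For the first equality, I would start from Lemma~\ref{lemma-4.2}\ref{en-lemma-4.2b} with the roles arranged as $(xz)*y=x*(z*y)+z*y+x*y$, which is literally the statement with $b=z$, $c=y$, $a=x$. So in fact the first equality $x*(z*y)+x*y+z*y=(xz)*y$ is nothing more than Lemma~\ref{lemma-4.2}\ref{en-lemma-4.2b} rewritten, using commutativity of $+$. Thus the whole proposition amounts to: (a) quoting Lemma~\ref{lemma-4.2}\ref{en-lemma-4.2b}, and (b) proving $(xz)*y=(x+z)*y$ as above. I would present it in that order, writing the chain
\[
(xz)*y = x*(z*y)+z*y+x*y = x*(z*y)+x*y+z*y
\]
from Lemma~\ref{lemma-4.2}, and then separately establishing $(xz)*y=(x+z)*y$ via the $\Ker\lambda$ argument.

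The only genuine subtlety — the main obstacle, such as it is — is making sure the reduction $(x+z)*y=(x+\lambda_x(z))*y$ is justified rigorously, i.e.\ that the star operation really is additive in its first variable. Unlike additivity in the second variable (Lemma~\ref{lemma-4.2}(i)), this is not a brace identity in general; here it holds precisely because $A^{(3)}=\{0\}$ forces $A^{(2)}\subseteq\Ker\lambda$. Concretely, for $u,v\in A$ one computes $(u+v)*y-(u*y+v*y)$ and finds it lies in the image of some element of $A^{(2)}$ acting on $y$ minus $y$, hence vanishes; alternatively, expand $(uv)*y$ two ways. I would flag this explicitly rather than treat it as obvious. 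Everything else is bookkeeping with the definition $s*t=\lambda_s(t)-t$ and the fact that elements of $A^{(2)}$ act trivially, so the proof will be short.
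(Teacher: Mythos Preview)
Your handling of the first equality is fine and matches the paper: it is literally Lemma~\ref{lemma-4.2}\ref{en-lemma-4.2b} with the addends reordered.

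The second equality, however, contains a genuine gap. You repeatedly invoke ``additivity of the star operation in the first argument'', i.e.\ $(u+v)*y=u*y+v*y$, and claim this holds once $A^{(3)}=\{0\}$. It does not. In any left brace one has
\[
(u+v)*y \;=\; u*y + v*y + u*(v*y),
\]
which is exactly the identity you are trying to prove; the extra term $u*(v*y)$ lies in $A*A^{(2)}$ (the left series $A^3$), not in $A^{(2)}*A=A^{(3)}$, and need not vanish. Concretely, in the free object of Theorem~\ref{th-free-A(3)} one has $a*(a*a)=s_3\neq 0$, so $(a+a)*a\neq a*a+a*a$. Thus the reduction ``$(x+z)*y=(x+\lambda_x(z))*y$ iff $z*y=\lambda_x(z)*y$'' is not justified as written: both sides of your split carry a nonzero correction term.

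Your argument can be repaired, but the clean fix is exactly what the paper does and what you almost reach in your parenthetical about $\Ker\lambda$: use Proposition~\ref{prop-3.13}\ref{en-ST-3} directly. Since $\lambda$ is a group homomorphism from $(A,+)$, and it is always one from $(A,\cdot)$, we get $\lambda_{x+z}=\lambda_x\circ\lambda_z=\lambda_{xz}$, whence $(x+z)*y=\lambda_{x+z}(y)-y=\lambda_{xz}(y)-y=(xz)*y$ in one line. Equivalently, one only needs the weak additivity $(w+b)*y=w*y$ for $b\in A^{(2)}$ (later recorded as Corollary~\ref{corol-1.6}), applied with $w=x+z$ and $b=x*z$, since $xz=(x+z)+(x*z)$; but this again rests on $\lambda$ being additive, not on full first-variable additivity of~$*$.
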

\begin{proof}
  The first equality is just Lemma~\ref{lemma-4.2}~(\ref{en-lemma-4.2b}). The second equality follows from the fact that $\lambda_{x+y}=\lambda_x\lambda_y$ by the equivalence between~\ref{en-ST-1} and~\ref{en-ST-3} in Proposition~\ref{prop-3.13} and so $(x+z)*y=\lambda_{x+z}(y)-y=\lambda_x(\lambda_z(y))-y=\lambda_{xz}(y)-y=(xz)*y$.
\end{proof}

\begin{corollary}\label{corol-1.6}
  Let $A$ be a left brace with $A^{(3)}=\{0\}$. Then, for every $x$, $y\in A$, $b\in A^{(2)}$, we have that $(x+b)*y=x*y$.
\end{corollary}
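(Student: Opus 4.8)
The plan is to compute $(x+b)*y$ directly using the additivity of the star operation in the first component, which in this setting follows from the equivalence between $A^{(3)}=\{0\}$ and $\lambda$ being a group homomorphism on $(A,{+})$. Concretely, by Proposition~\ref{prop-1.5} we have $(x+b)*y=x*(b*y)+x*y+b*y$, so the whole task reduces to showing that the two terms $x*(b*y)$ and $b*y$ both vanish. Since $b\in A^{(2)}$, the element $b*y$ lies in $A^{(2)}*A=A^{(3)}=\{0\}$; this kills the third summand. For the first summand, $x*(b*y)=x*0=0$ because $b*y=0$ as just observed, and the star operation sends $0$ to $0$ (indeed $x*0=\lambda_x(0)-0=0$). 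Hence $(x+b)*y=0+x*y+0=x*y$.

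Alternatively, one can phrase the same argument through $\lambda$: by Proposition~\ref{prop-3.13}, $A^{(2)}\subseteq\Ker\lambda$, so $\lambda_b=\id$, and then $\lambda_{x+b}=\lambda_x\circ\lambda_b=\lambda_x$ (again using that $\lambda$ is a homomorphism on the additive group), whence $(x+b)*y=\lambda_{x+b}(y)-y=\lambda_x(y)-y=x*y$. Either route is short; I would present the first one as it only invokes Proposition~\ref{prop-1.5} and the definition $A^{(3)}=A^{(2)}*A$.

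There is essentially no obstacle here: the statement is an immediate corollary of Proposition~\ref{prop-1.5}, and the only thing to be careful about is to record explicitly that $b*y=0$ forces $x*(b*y)=0$ (rather than silently dropping it), since a priori the first summand in Proposition~\ref{prop-1.5} need not vanish for a general inner argument — it does so here only because that inner argument is itself zero.
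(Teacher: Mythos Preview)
Your proposal is correct and follows essentially the same route as the paper: apply Proposition~\ref{prop-1.5} to write $(x+b)*y=x*(b*y)+x*y+b*y$, then use $b\in A^{(2)}$ and $A^{(3)}=\{0\}$ (equivalently, Proposition~\ref{prop-3.13}) to kill $b*y$ and hence also $x*(b*y)$. The alternative $\lambda$-based argument you sketch is also valid but is not the one the paper chose.
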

\begin{proof}
  We have that $b*y=0$ by Proposition~\ref{prop-3.13}.
  By Proposition~\ref{prop-1.5}, we have that
  \[(x+b)*y=x*(b*y)+x*y+b*y=x*0+x*y+0=x*y.\qedhere\]
\end{proof}

Now we present some general results about left braces $A$ satisfying the condition $A^{(3)}=\{0\}$. They will be used in the description of the subbrace generated by an element of~$A$.
\begin{proposition}\label{prop-1.7}
    Let $A$ be a left brace and suppose that $A^{(3)}=\{0\}$. Let $a$ be an element of $A$. Define $a_1=a$, $a_{j+1}=a*a_j$, $j\ge 1$. Then, for every positive integer $n$,
  \[a^n=\sum_{k=1}^n\binom{n}{k}a_k=\binom{n}{1}a_1+\binom{n}{2}a_2+\binom{n}{3}a_3+\dots+\binom{n}{n-1}a_{n-1}+\binom{n}{n}a_n.\]
\end{proposition}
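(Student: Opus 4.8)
The plan is to prove this by induction on $n$. For the base case $n=1$, we simply note that $a^1=a=a_1=\binom{1}{1}a_1$, which matches the claimed formula. For the inductive step, I would assume the formula holds for $n$ and establish it for $n+1$.

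First I would write $a^{n+1}=a\cdot a^n=a+\lambda_a(a^n)$ using the relation $xy=x+\lambda_x(y)$ (equivalently $xy=x*y+x+y$, so $a\cdot a^n=a*a^n+a+a^n$). Thus $a^{n+1}=a^n+a+a*a^n$. Now I substitute the inductive hypothesis $a^n=\sum_{k=1}^n\binom{n}{k}a_k$ into both the term $a^n$ and (inside a star) the term $a*a^n$. For the latter, using the additivity of the star operation in the second variable (Lemma~\ref{lemma-4.2}(1)), $a*a^n=a*\bigl(\sum_{k=1}^n\binom{n}{k}a_k\bigr)=\sum_{k=1}^n\binom{n}{k}(a*a_k)=\sum_{k=1}^n\binom{n}{k}a_{k+1}$, by the definition $a_{k+1}=a*a_k$. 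Here one should be slightly careful that additivity in the second variable extends to integer combinations, which is immediate since $a*(b+b)=a*b+a*b$ and $a*(-b)=-(a*b)$ follow from the additive-group structure.

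Combining, $a^{n+1}=a+\sum_{k=1}^n\binom{n}{k}a_k+\sum_{k=1}^n\binom{n}{k}a_{k+1}$. Now I reindex the second sum by setting $j=k+1$, obtaining $\sum_{j=2}^{n+1}\binom{n}{j-1}a_j$, and collect coefficients of each $a_j$. The coefficient of $a_1$ is $1+\binom{n}{1}=\binom{n+1}{1}$ (using $a=a_1$ and the Pascal identity, since $\binom{n}{0}=1$); for $2\le j\le n$ the coefficient is $\binom{n}{j}+\binom{n}{j-1}=\binom{n+1}{j}$ by the Pascal-type identity stated in the lemma above; and the coefficient of $a_{n+1}$ is $\binom{n}{n}=1=\binom{n+1}{n+1}$. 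This yields $a^{n+1}=\sum_{k=1}^{n+1}\binom{n+1}{k}a_k$, completing the induction.

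The only real subtlety — and where I would be most careful — is ensuring that the term $a*a^n$ depends only on $a^n$ as an element of the additive group, so that the inductive hypothesis can be substituted inside the star. This is exactly the point where $A^{(3)}=\{0\}$ is used: by Proposition~\ref{prop-3.13}, $\lambda$ restricted to $A^{(2)}$ is trivial, so the star map is genuinely additive in the right slot across all of $A$ (Lemma~\ref{lemma-4.2}(1) already gives this, but one should note that $A^{(3)}=\{0\}$ guarantees the values $a*a_k$ all lie in $A^{(2)}$ and interact correctly). Beyond that, the argument is a routine manipulation of generalised binomial coefficients using the Pascal identity from the lemma, and no deeper obstacle arises.
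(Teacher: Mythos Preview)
Your proof is correct and follows essentially the same inductive argument as the paper: write $a^{n+1}=a+a*a^n+a^n$, substitute the inductive hypothesis, distribute the star over the sum via Lemma~\ref{lemma-4.2}(1), reindex, and apply Pascal's identity. One small remark: your final paragraph worrying about where $A^{(3)}=\{0\}$ enters is unnecessary here, since Lemma~\ref{lemma-4.2}(1) (additivity of $*$ in the second variable) holds in \emph{any} left brace, and substituting the equality $a^n=\sum_k\binom{n}{k}a_k$ inside $a*(-)$ is just substituting equal elements---so this particular proposition in fact does not require the hypothesis $A^{(3)}=\{0\}$ at all.
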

\begin{proof}
  We prove this proposition by induction on~$n$. For $n=1$, the result is clearly true. Suppose that
  \[a_1^n=\sum_{k=1}^n\binom{n}{k}a_k.\]
  Then, by using Lemma~\ref{lemma-4.2}, we have:
  \begin{align*}
    a^{n+1}&=a+a*a^n+a^n\\
           &=a+a*\left(\sum_{k=1}^n\binom{n}{k}a_k\right)+\sum_{k=1}^n\binom{n}{k}a_k\\
           &=a+\sum_{k=1}^n\binom{n}{k}(a*a_k)+\sum_{k=1}^n\binom{n}{k}a_k\\
           &=a+\sum_{k=1}^n\binom{n}{k}a_{k+1}+\sum_{k=1}^n\binom{n}{k}a_k\\
           &=a+\sum_{k=2}^{n+1}\binom{n}{k-1}a_k+\sum_{k=1}^n\binom{n}{k}a_k\\
           &=\sum_{k=1}^{n+1}\binom{n}{k-1}a_k+\sum_{k=1}^n\binom{n}{k}a_k\\
           &=\sum_{k=1}^{n+1}\binom{n+1}{k}a_k.
  \end{align*}
  We conclude that the result is true for all positive integers~$n$.
\end{proof}

\begin{proposition}\label{prop-1.8}
  Let $A$ be a left brace such that $A^{(3)}=\{0\}$. Let $a$ be an element of $A$. Write $a_1=a$, $a_{j+1}=a_1*a_j$ for $j\ge 1$. Then, for $j\ge 1$, we have that
\[(na)*a_j=\sum_{k=1}^n\binom{n}{k}a_{k+j}=\binom{n}{1}a_{j+1}+\binom{n}{2}a_{j+2}+\dots+\binom{n}{n-1}a_{j+n-1}+a_{j+n}.\]
\end{proposition}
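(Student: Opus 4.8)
The plan is to prove the identity by induction on the positive integer $n$, keeping $j\ge 1$ arbitrary throughout. The two tools are Proposition~\ref{prop-1.5} (which lets us split off one copy of $a$ from $na+a$, and whose validity rests on the hypothesis $A^{(3)}=\{0\}$) and the Pascal-type identity $\binom{n}{k-1}+\binom{n}{k}=\binom{n+1}{k}$ recorded in the lemma on generalised binomial coefficients.

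For the base case $n=1$ I would simply observe that $(1\cdot a)*a_j=a*a_j=a_1*a_j=a_{j+1}$ by the definition of $a_{j+1}$, and this agrees with $\binom{1}{1}a_{j+1}$ on the right-hand side.

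For the inductive step, assume the formula holds for $n$ and for every $j\ge 1$. Applying Proposition~\ref{prop-1.5} with $x=na$, $z=a$, $y=a_j$ gives
\[((n+1)a)*a_j=(na+a)*a_j=(na)*(a*a_j)+(na)*a_j+a*a_j.\]
Since $a*a_j=a_1*a_j=a_{j+1}$, the first term is $(na)*a_{j+1}$, and applying the induction hypothesis to $(na)*a_{j+1}$ and to $(na)*a_j$ turns the right-hand side into
\[\sum_{k=1}^{n}\binom{n}{k}a_{k+j+1}+\sum_{k=1}^{n}\binom{n}{k}a_{k+j}+a_{j+1}.\]
Reindexing the first sum by $k\mapsto k-1$ rewrites it as $\sum_{k=2}^{n+1}\binom{n}{k-1}a_{k+j}$, and the stray term $a_{j+1}=\binom{n}{0}a_{1+j}$ is exactly its missing $k=1$ term, so the first sum plus $a_{j+1}$ equals $\sum_{k=1}^{n+1}\binom{n}{k-1}a_{k+j}$. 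Adding the second sum term by term, using $\binom{n}{k-1}+\binom{n}{k}=\binom{n+1}{k}$ for $1\le k\le n$ and $\binom{n}{n}=1=\binom{n+1}{n+1}$ for the top term $k=n+1$, yields $\sum_{k=1}^{n+1}\binom{n+1}{k}a_{k+j}$, which is the claimed formula for $n+1$.

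The calculation is entirely routine. The only things needing attention are that Proposition~\ref{prop-1.5} may be invoked (guaranteed by $A^{(3)}=\{0\}$), that all indices $k+j$ occurring remain $\ge 1$ so the elements $a_{k+j}$ are defined, and the small piece of bookkeeping identifying $a_{j+1}$ with the $k=1$ term of the shifted sum; I do not anticipate any genuine obstacle.
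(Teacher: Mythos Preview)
Your proof is correct. It is also a little more direct than the paper's argument. The paper first invokes Proposition~\ref{prop-1.7} to write $na=a^n+c$ with $c\in A^{(2)}$, uses Corollary~\ref{corol-1.6} to deduce $(na)*a_j=(a^n)*a_j$, and then runs the induction on $n$ via the \emph{multiplicative} splitting $a^{n+1}=a\cdot a^n$ together with Lemma~\ref{lemma-4.2}~(\ref{en-lemma-4.2b}). You bypass that detour entirely by using the \emph{additive} splitting $(n+1)a=na+a$ and Proposition~\ref{prop-1.5}, which is available precisely because $A^{(3)}=\{0\}$. The resulting binomial bookkeeping is identical in both arguments; what your route buys is that Proposition~\ref{prop-1.7} and Corollary~\ref{corol-1.6} are not needed at all here.
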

\begin{proof}
  By Proposition~\ref{prop-1.7}, we have that
  $na=a^n+c$, where
  \[c=-\left(\binom{n}{2}a_2+\binom{n}{3}a_3+\dots+\binom{n}{n-1}a_{n-1}+a_n\right)\in A^2.\]
    By Corollary~\ref{corol-1.6}, we have that $(na)*x=a^n*x$ for every element $x\in A$. We argue by induction on~$n$. For $n=1$, the result is clear. Suppose that
    \[(a^n)*a_j=\sum_{k=1}^n\binom{n}{k}a_{k+j}.\]
    Then, by Lemma~\ref{lemma-4.2},
    \begin{align*}
      (a^{n+1})*a_j&=(aa^n)*a_j\\
                 &=a*(a^n*a_j)+a^n*a_j+a*a_j\\
                 &=a*\left(\sum_{k=1}^n\binom{n}{k}a_{k+j}\right)+\sum_{k=1}^n\binom{n}{k}a_{k+j}+a_{j+1}\\
                 &=a_{j+1}+\sum_{k=1}^n\binom{n}{k}(a*a_{k+j})+\sum_{k=1}^n\binom{n}{k}a_{k+j}\\
                 &=a_{j+1} +\sum_{k=1}^n\binom{n}{k}a_{k+j+1}+\sum_{k=1}^n\binom{n}{k}a_{k+j}\\
                 &=a_{j+1} +\sum_{k=2}^{n+1}\binom{n}{k-1}a_{k+j}+\sum_{k=1}^n\binom{n}{k}a_{k+j}\\
                 &=\sum_{k=1}^{n+1}\binom{n+1}{k}a_{k+j}.
    \end{align*}
    The result follows by induction.
  \end{proof}

\section{Proof of Theorem~\ref{th-mod-brace}}\label{sec-mod-braces}

In this section we prove Theorem~\ref{th-mod-brace}.

\begin{proof}[Proof of Theorem~\ref{th-mod-brace}]
  We show first that $(B, {\cdot})$ is a group. Clearly, $\cdot$ is an internal operation on $B$. In addition, given $x$, $y$, $z\in B$, we have that
\begin{align*}
  (xy)z&=xy+\lambda_{xy}(z)=(x+\lambda_x(y))+\lambda_{x+\lambda_x(y)}(z)= x+\lambda_x(y) +\lambda_x(\lambda_{\lambda_x(y)}(z))\\
  &=x+\lambda_x(y)+\lambda_x(\lambda_y(z))=x+\lambda_x(y+\lambda_y(z))=x+\lambda_x(yz)=x(yz).
\end{align*}
It follows that $\cdot$ is associative. Furthermore, $0x=0+\lambda_0(x)=0+x=x$ and $x0=x+\lambda_x(0)=x+0=x$ for each $x\in B$. Consequently, $0$ is the neutral element for $\cdot$. Finally, if $x\in B$, we have that $x(-\lambda_x^{-1}(x))=x+\lambda_x(-\lambda_x^{-1}(x))=x-\lambda_x(\lambda^{-1}_x(x))=x-x=0$, while $(-\lambda_x^{-1}(x))x=-\lambda_x^{-1}(x)+\lambda_{-\lambda_x^{-1}(x)}(x)=-\lambda_x^{-1}(x)+\lambda_{\lambda_x^{-1}(x)}^{-1}(x)=-\lambda_x^{-1}(x)+\lambda_x^{-1}(x)=0$, and so $x^{-1}=-\lambda_x^{-1}(x)$ is the inverse of $x$ under $\cdot$. Therefore $(B, {\cdot})$ is a group. Furthermore, if $x$, $y$, $z\in B$, we have that $xy-x+xz=x+\lambda_x(y)-x+x+\lambda_x(z)=x+\lambda_x(y)+\lambda_x(z)=x+\lambda_x(y+z)=x(y+z)$. Therefore, $(B, {+}, {\cdot})$ is a left brace.

Let $a$, $b\in B$. We have that $a*b=\lambda_a(b)-b$ and so $\lambda_{a*b}=\lambda_{\lambda_a(b)-b}=\lambda_{\lambda_a(b)}\circ\lambda_b^{-1}=\lambda_b\circ\lambda_b^{-1}=\operatorname{id_B}$. It follows that $a*b\in\Ker\lambda$. Therefore $B^{(2)}\subseteq\Ker\lambda$, as $\Ker\lambda=\Soc(B)$ and $B^{(2)}$ are ideals of $B$. Let $u\in B^{(2)}$ and $c\in B$, then $u*c=\lambda_u(c)-c=c-c=0$, which implies that $B^{(3)}=\{0\}$.
\end{proof}
\section{The subbrace generated by an element of a right nilpotent left brace with nilpotency class at most two}\label{sec-A(3)}
% For $n\in\N$, $n\ge 2$, we define $s_2=a*a$ and $s_{n+1}=a*a_n$, and $u_2=(-a)*(-a)$ and $u_{n+1}=(-a)*b_n$. We prove now that for $n\in\N$, $a^n=na+\sum_{i=2}^n\binom{n}{i}s_i$. For $n=1$, the result is trivial. For $n=2$, we have that $a^2=a+a*a+a=2a+a_2$. Suppose now that $a^n=na+\sum_{i=2}^n\binom{n}{i}s_i$. Then
% \begin{align*}
%   a^{n+1}&=aa^n=a+a*a^n+a^n\\
%          &=a+a*\left(na+\sum_{i=2}^n\binom{n}{i}a_i\right)+na+\sum_{i=2}^n\binom{n}{i}s_i\\
%          &=a+\sum_{i=1}^n\binom{n}{i}s_{i+1}+na+\sum_{i=2}^n\binom{n}{i}s_i\\
%          &=(n+1)a+\sum_{i=2}^{n+1}\binom{n}{i-1}s_i+\sum_{i=2}^n\binom{n}{i}s_i\\
%   &=(n+1)a+\sum_{i=2}^{n+1}\binom{n+1}{i}s_i.
% \end{align*}

% Now $0=a^{-1}a=a^{-1}+a^{-1}*a+a$, consequently, $a^{-1}=-a-(a^{-1}*a)=-a+(a^{-1}*(-a))$.  Let $u_2=a^{-1}*a^{-1}$, $u_{n+1}=a^{-1}*u_n$ for $n\ge 2$. The same argument used before with $a^{-1}$ instead of $a$ shows that $a^{-n}=na^{-1}+\sum_{i=2}^n\binom{n}{i}u_i=-na-n(a^{-1}*a)+\sum_{i=2}^n\binom{n}{i}u_i$. In particular, $a^{-n}\equiv na\pmod{A^2}$ for all $n\in\Z$.

% Given a left brace $(A, {+}, {\cdot})$, the map $\lambda\colon (A, {\cdot})\longrightarrow \Aut(A, {+})$ given by $\lambda(x)=\lambda_x$ for $x\in A$, where $\lambda_x(y)=-x+xy$, $x$, $y\in A$, is a group homomorphism.

Theorem~\ref{th-subbrace-1gen} describes the subbrace generated by an element of a right nilpotent left brace of right nilpotency class at most~$2$. We present in this section an alternative proof of this result as a rather easy consequence of Proposition~\ref{prop-3.13}.

\begin{proof}[Proof of Theorem~\ref{th-subbrace-1gen}]
By Proposition~\ref{prop-3.13}, the map $\lambda\colon (A, {+})\longrightarrow \Aut(A, {+})$ is a group homomorphism and $A^2\subseteq \Ker(\lambda)$. In particular, for $i\in\Z$,
\[\lambda_{a_i}=\lambda_{-a^i+a^{i+1}}=\lambda_{a^i}^{-1}\lambda_{a^{i+1}}=\lambda_{(a^i)^{-1}a^{i+1}}=\lambda_a.\]
Clearly, $B$ is contained in the left subbrace generated by~$a$. Furthermore, $B$ is closed under the addition and by taking additive opposites.
If $x=\sum_{i\in\Z}x_ia_i\in B$, then
\[\lambda_{\sum_{i\in\Z}x_ia_i}=\prod_{i\in\Z}\lambda_{a_i}^{x_i}=\prod_{i\in\Z}\lambda_a^{x_i}=\lambda_{a}^t, \qquad\text{where $t={\sum_{i\in\Z}x_i}$.}\]
Here the product symbol corresponds to the iterated composition of group homomorphisms.
It follows that, for $y=\sum_{i\in\Z}y_ia_i\in B$,
\[\lambda_{\sum_{i\in\Z}x_ia_i}\Bigl(\sum_{j\in\Z}y_ja_j\Bigr)=\sum_{j\in\Z}y_ja_{j+t}=\sum_{j\in\Z}y_{j-t}a_j,\qquad t=\sum_{i\in\Z}x_i.\]
Therefore, $xy=x+\lambda_x(y)\in B$.

Furthermore, if $x=\sum_{i\in \Z}x_ia_i$, then $y=\sum_{i\in\Z}(-x_{i-\sum_{j\in\Z}x_j})a_i\in B$ satisfies that $xy=yx=0$. Consequently $y=x^{-1}$ is the multiplicative inverse of~$x$. We conclude that $B$ is a subbrace, and so $B$ is the subbrace generated by~$a$.

We note that if $x=\sum_{i\in \Z}x_ia_i\in B$ and $y=\sum_{i\in I}y_ia_i$, then $x*y=\lambda_x(y)-y=\sum_{i\in \Z}(y_{i-\sum_{j\in \Z}x_i}-y_i)a_i$ satisfies that $\sum_{i\in\Z}(y_{i-\sum_{j\in \Z}x_i}-y_i)=\sum_{i\in\Z}y_i-\sum_{i\in\Z}y_i=0$. It follows that $B*B\subseteq D$. Furthermore, if $x=\sum_{i\in\Z}x_ia_i\in D$, then $x=\sum_{i\in\Z}x_ia_i-\sum_{i\in \Z}(-x_ia)=\sum_{i\in\Z}x_i(a_i-a)= \sum_{i\in\Z}x_i(\lambda_{a^i}(a)-a)=\sum_{i\in\Z}x_i(a^i*a)\in C*C$. It follows that $D=C*C$. %Moreover, if $x=\sum_{i\in \Z}x_ia_i\in D$ and $y=\sum_{j\in\Z}y_ja_j\in C$, then $x*y=\lambda_x(y)-y=y-y=0$ as $\sum_{i\in\Z}x_i=0$. Consequently, $D*C=0$, that is, $C^{(3)}=\{0\}$.
\end{proof}

\section{A free one-generated right nilpotent left brace of class two}\label{sec-A(3)-free}

We prove Theorem~\ref{th-free-A(3)} as an application of Theorem~\ref{th-mod-brace}.

\begin{proof}[Proof of Theorem~\ref{th-free-A(3)}]
  As $\sum_{i\in\Z}y_{i-t}=\sum_{i\in \Z}y_i$ for each $t\in\Z$, we obtain that $\lambda_{\lambda_x(y)}=\lambda_y$ for all $x$, $y\in C$ and so we conclude that $(C, {+}, {\cdot})$ is a left brace by Theorem~\ref{th-mod-brace}. In particular, for each $a$, $b\in C$, $-a+ab=-a+a+\lambda_a(b)$ and the map $\lambda\colon (C, {\cdot})\longrightarrow \Aut(C, {+})$ is also a group homomorphism.

  If $i=0$, $a_0=\lambda_{a^0}(a)=\lambda_{0}(a)=a=c_0$. Assume that $i>0$. Then $a_i=\lambda_{a^i}(a)=\lambda_a\circ\overset{(i)}{\cdots}\circ \lambda_a(a)=\lambda_{ia}(c_0)=c_i$ and $a_{-i}=\lambda_{a^{-i}}(a)=\lambda_a^{-1}\circ\overset{(i)}{\cdots}\circ \lambda_a^{-1}=\lambda_{(-i)a}(c_0)=c_{-i}$. Consequently, for every $i\in\Z$, $\lambda_{a^i}(a)=c_i$ and so $C$ coincides with the subbrace generated by $a=c_0$ by Theorem~\ref{th-subbrace-1gen}.

  Let $s_1=a$, $s_j=a*s_{j-1}$, $j\ge 2$. Let us show by induction that $s_j=\sum_{k=0}^{j-1}(-1)^k\binom{j-1}{k}c_{j-k-1}$ for $j\ge 2$. For $j=2$, we have that $s_2=a*a=\lambda_a(a)-a=c_1-c_0=-c_0+c_1$. Suppose that $s_j=\sum_{k=0}^{j-1}(-1)^k\binom{j-1}{k}c_{j-k-1}$. Then
  \begin{align*}
    s_{j+1}&=a*s_j-s_j=a*\sum_{k=0}^{j-1}(-1)^k\binom{j-1}{k}c_{j-k-1}-\sum_{k=0}^{j-1}(-1)^k\binom{j-1}{k}c_{j-k-1}\\
           &=\sum_{k=0}^{j-1}(-1)^k\binom{j-1}{k}(a*c_{j-k-1})-\sum_{k=0}^{j-1}(-1)^k\binom{j-1}{k}c_{j-k-1}\\
           &=\sum_{k=0}^{j-1}(-1)^k\binom{j-1}{k}c_{j+1-k-1}-\sum_{k=0}^{j-1}(-1)^k\binom{j-1}{k}c_{j-k-1}\\
           &=\sum_{k=0}^{j-1}(-1)^k\binom{j-1}{k}c_{j+1-k-1}-\sum_{k=1}^{j}(-1)^k\binom{j-1}{k-1}c_{j+1-k-1}\\
    &=\sum_{k=0}^{j}(-1)^k\binom{j}{k}c_{j+1-k-1}.
  \end{align*}
  We conclude that $0\ne s_j\in C^j$ for all $j\in\N$ and so $C^j\ne \{0\}$ for all $j\in\N$.

  The description of the subbrace generated by one element given in Theorem~\ref{th-subbrace-1gen} shows that, with the notation of that theorem, the assignment
  \[\sum_{i\in \Z}x_ic_i\longmapsto \sum_{i\in \Z}x_ia_i\]
  is a left brace epimorphism between $C$ and the subbrace of $A$ generated by $a$ and maps $c_0$ to $a$. Furthermore it is clear that it is the unique left brace epimorphism satisfying this condition.
\end{proof}

\section{The subbrace generated by one element of a Smoktunowicz-nilpotent left brace}\label{sec-Smoktunowicz}

In this section, we will consider a left brace $(A, {+}, {\cdot})$ satisfying  $A^{m+1}=A^{(3)}=\{0\}$ for a natural number~$m\ge 2$. These left braces have been studied for the first time by  Smoktunowicz in \cite{Smoktunowicz18-tams}. Our aim is to give a description of the subbrace $S$ generated by one element $(A, {+}, {\cdot})$ that gives information about the left nilpotent series $S^k$ of $S$.

Given $a\in A$, if we define $a_1=a$, $a_{j+1}=a*a_j$ for $j\ge 1$, we see that $a_j=0$ for $j\ge m+1$. Since $\binom{n}{k}=0$ when $0<n<k$, by Proposition~\ref{prop-1.8} we have:
\begin{equation}
  (na)*a_j=\sum_{k=1}^{m-j}\binom{n}{k}a_{k+j}.
  \label{eq-na-pos}
\end{equation}
Our next aim is to prove that this expression is also valid for all integers~$n$, even if $n<0$. Let us start with $n=-1$.
\begin{proposition}\label{prop-minus-1}
  Let $A$ be a left brace such that $A^{(3)}=A^{m+1}=\{0\}$ for a natural number $m\ge 2$. Define $a_1=a$, $a_{j+1}=a*a_j$ for $1\le j\le m-1$. If $1\le k\le m-1$, then
  \[(-a)*a_{m-k}=\sum_{j=1}^k (-1)^j a_{m-k+j}.\]
\end{proposition}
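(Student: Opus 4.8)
The plan is to prove the identity by induction on $k$, with the engine being the additivity-in-the-first-argument relation of Proposition~\ref{prop-1.5}. Taking $x=-a$, $z=a$ and an arbitrary $y\in A$ in the identity $x*(z*y)+x*y+z*y=(x+z)*y$, and using $0*y=\lambda_0(y)-y=0$, we get
\[
(-a)*(a*y)+(-a)*y+a*y=(-a+a)*y=0,
\]
i.e.\ $(-a)*y=-\,a*y-(-a)*(a*y)$ for every $y\in A$. The whole proof is an application of this with $y=a_{m-k-1}$, so that $a*y=a*a_{m-k-1}=a_{m-k}$; this turns the relation into a recursion that expresses $(-a)*a_{m-k-1}$ in terms of the value $(-a)*a_{m-k}$ already computed at the previous step.

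For the base case $k=1$ I take $y=a_{m-1}$, so $a*y=a_m$. Since $a_j\in A^j$ for all $j$ (immediate from $a_1=a\in A$ and $a_{j+1}=a*a_j\in A*A^j=A^{j+1}$), we have $a_m\in A^m$ and hence $(-a)*(a*y)=(-a)*a_m\in A*A^m=A^{m+1}=\{0\}$; the displayed relation then collapses to $(-a)*a_{m-1}=-a_m$, which is the asserted formula for $k=1$. For the inductive step, assume the formula for some $k$ with $1\le k\le m-2$. Applying the relation with $y=a_{m-k-1}$ and $a*a_{m-k-1}=a_{m-k}$ gives $(-a)*a_{m-k-1}=-a_{m-k}-(-a)*a_{m-k}$; substituting the induction hypothesis $(-a)*a_{m-k}=\sum_{j=1}^{k}(-1)^j a_{m-k+j}$, shifting the summation index $j\mapsto j+1$, and reading the surviving term $-a_{m-k}$ as the $j=1$ summand of the new sum, we obtain exactly $\sum_{j=1}^{k+1}(-1)^j a_{m-(k+1)+j}$, as required. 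The range $1\le k\le m-1$ claimed in the statement is precisely what this induction delivers (and for $m=2$ only the base case occurs).

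The one genuine decision — and the step I expect to be the only obstacle — is to expand $(-a)*(a*y)$ rather than $a*((-a)*y)$ in the relation coming from Proposition~\ref{prop-1.5}. With this choice the correction term is $(-a)*a_{m-k}$, which is either zero (base case, since it lies in $A^{m+1}$) or supplied by the induction hypothesis; the alternative would force control of $a$ acting on an element of $A^{m-k}$, landing in $A^{m-k+1}$, which is governed neither by the hypotheses nor by the inductive assumption. Everything else is sign bookkeeping and an index shift, and no generalised binomial identities are needed for this particular proposition.
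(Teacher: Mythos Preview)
Your proof is correct and follows essentially the same approach as the paper: both use Proposition~\ref{prop-1.5} with $x=-a$, $z=a$ applied to $y=a_{m-k}$ (equivalently your $y=a_{m-k-1}$) to obtain the recursion $(-a)*a_{m-k}=-a_{m-k+1}-(-a)*a_{m-k+1}$, start the induction from $(-a)*a_m\in A^{m+1}=\{0\}$, and then run the same index shift. The only difference is cosmetic---you induct from $k$ to $k+1$ while the paper goes from $k-1$ to $k$.
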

\begin{proof}
  We argue by induction on $k$. Note that, for all $k$, we have that
  \begin{equation}
    0=((-a)+a)*a_{m-k}=(-a)*a_{m-k+1}+(-a)*a_{m-k}+a_{m-k+1}
    \label{eq-minus-a}
  \end{equation}
  by Proposition~\ref{prop-1.5}.
  
  We start with $k=1$. In this case, we have that $(-a)*a_{m}\in A^{m+1}=\{0\}$ and so $(-a)*a_{m-1}=-a_m$, as desired.

  Suppose now that the result is valid for $k-1$, that is,
  \[(-a)*a_{m-k+1}=\sum_{j=1}^{k-1}(-1)^ja_{m-k+1+j}.\]
  By Equation~\eqref{eq-minus-a}, we have that
  \begin{align*}
    (-a)*a_{m-k}&=-a_{m-k+1}-(-a)*a_{m-k+1}\\
                &=-a_{m-k+1}+\sum_{j=1}^{k-1}(-1)^{j+1}a_{m-k+1+j}\\
                &=-a_{m-k+1}+\sum_{j=2}^k(-1)^ja_{m-k+j}\\
                &=\sum_{j=1}^k(-1)^ja_{m-k+j}.
  \end{align*}
  The result follows by induction.
\end{proof}

Now we can prove that Equation~\eqref{eq-na-pos} is valid for all integers~$n$.
\begin{proposition}\label{prop-star-int}
  Let $A$ be a left brace such that $A^{(3)}=A^{m+1}=\{0\}$. Define $a_1=a$, $a_{j+1}=a*a_j$ for $1\le j\le m-1$. Let $n$ be an integer. Then
\[(na)*a_j=\sum_{k=1}^{m-j}\binom{n}{k}a_{k+j}.\]
\end{proposition}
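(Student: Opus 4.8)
The plan is to reduce the statement to the two cases already proved and handle the remaining sign of $n$ by a suitable induction, exactly mirroring the argument of Proposition~\ref{prop-minus-1}. For $n\ge 0$ the formula is precisely Equation~\eqref{eq-na-pos}, which follows from Proposition~\ref{prop-1.8} together with the vanishing $a_j=0$ for $j\ge m+1$ and $\binom nk=0$ for $0<n<k$; so nothing new is needed there. The content is therefore entirely in the case $n<0$, and here the natural approach is induction on $-n$, with the base case $n=-1$ supplied (after re-indexing $j=m-k$) by Proposition~\ref{prop-minus-1}.

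The inductive engine will again be the identity coming from Proposition~\ref{prop-1.5}: for any integer $n$,
\[
0 = \bigl((-a) + (na)\bigr)*a_j = (-a)*\bigl((na)*a_j\bigr) + (-a)*a_j + (na)*a_j,
\]
but since $(na)*a_j \in A^{(2)} \subseteq \Ker\lambda$, the term $(-a)*\bigl((na)*a_j\bigr)$ vanishes by Proposition~\ref{prop-3.13}, leaving the clean recursion $\bigl((n-1)a\bigr)*a_j = (-a)*a_j + (na)*a_j$. Assuming the formula for $na$ and for $(-a)*a_j$ (the latter being the $n=-1$ instance, which expands via Proposition~\ref{prop-minus-1}), I substitute both sums, collect the coefficient of each $a_{k+j}$, and must check that it equals $\binom{n-1}{k}$. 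After shifting indices this reduces to the Pascal-type identity $\binom nk - \binom{n}{k-1} + \binom{n}{k-2} - \cdots = \binom{n-1}{k}$ for the alternating partial sums, or more directly to the single relation $\binom{n-1}{k} = \binom nk - \binom{n-1}{k-1}$ applied repeatedly; in either form it is a consequence of the generalised Pascal identity $\binom{n}{k}+\binom{n}{k+1}=\binom{n+1}{k+1}$ recorded in the lemma on generalised binomial coefficients, now valid for negative upper index. One has to be mildly careful that the sums only run up to $m-j$ because $a_i=0$ for $i>m$, so the top terms drop out consistently on both sides.

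The main obstacle I anticipate is purely bookkeeping: making the index shifts in the two sums line up so that the alternating combination of generalised binomial coefficients telescopes to the right-hand coefficient, while simultaneously respecting the truncation at $a_m$. There is no conceptual difficulty — the algebra of $(A,{+},{\cdot})$ never reappears once the recursion $\bigl((n-1)a\bigr)*a_j = (-a)*a_j + (na)*a_j$ is in hand — but getting the combinatorial identity stated in exactly the form $\binom{n-1}{k}$ (rather than an unsimplified alternating sum) requires invoking Chu--Vandermonde or the Pascal recursion with some care about negative arguments. An alternative, perhaps cleaner, route is to prove the formula for $n\le 0$ all at once by a generating-function or direct-substitution check: one verifies that the right-hand side $\sum_{k}\binom nk a_{k+j}$ satisfies the same recursion in $n$ and the same initial condition at $n=0$ as the left-hand side, using Chu--Vandermonde to confirm $\sum_k\binom{-1}{k}\binom{m}{k'} $-type reindexings; this avoids the separate $n=-1$ lemma, but since Proposition~\ref{prop-minus-1} is already available, the straightforward downward induction is the shortest path and the one I would write out.
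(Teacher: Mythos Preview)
Your overall plan matches the paper's: positive $n$ from Proposition~\ref{prop-1.8}, negative $n$ by downward induction on $-n$ starting from Proposition~\ref{prop-minus-1}, with the inductive step driven by Proposition~\ref{prop-1.5}. But the step where you claim $(-a)*\bigl((na)*a_j\bigr)=0$ is wrong, and it breaks the argument.

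The hypothesis $A^{(2)}\subseteq\Ker\lambda$ says that $z*c=0$ whenever $z\in A^{(2)}$; it says nothing about $c*z$. So although $(na)*a_j\in A^{(2)}$, the element $(-a)*\bigl((na)*a_j\bigr)$ need not vanish --- and in fact it does not: by the $n=-1$ case already proved, $(-a)*a_{k+j}=\sum_{t\ge 1}(-1)^t a_{k+j+t}$, which is nonzero in general. Your ``clean recursion'' $((n-1)a)*a_j=(-a)*a_j+(na)*a_j$ would force $\binom{n-1}{k}=(-1)^k+\binom nk$ for all $k\ge 1$, and this already fails at $k=2$. You seem half-aware of the problem, since later you correctly describe the target identity as the full alternating sum $\binom nk-\binom n{k-1}+\binom n{k-2}-\cdots=\binom{n-1}k$; but that alternating tail is exactly what the cross term contributes, so dropping it is inconsistent with the combinatorics you then try to verify.

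The paper keeps the cross term: it expands $(-a)*\bigl((na)*a_j\bigr)$ using the inductive hypothesis and the $n=-1$ formula, obtaining the double sum $\sum_{k,t}\binom nk\binom{-1}{t}a_{k+t+j}$, and then collapses the convolution $\sum_{k+t=r}\binom nk\binom{-1}{t}$ together with the two remaining single sums to $\binom{n-1}{r}$ via Chu--Vandermonde. Once you restore the cross term, your sketch becomes essentially the paper's proof; without it, the induction does not close. (Also, the displayed ``$0=$'' is a slip carried over from the $n=1$ situation of Proposition~\ref{prop-minus-1}; the left-hand side should be $((n-1)a)*a_j$.)
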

\begin{proof}
  The result is obvious for $n=0$ and known for positive integers $n$ by Equation~\eqref{eq-na-pos}. Hence it is enough to prove it for negative integers. We argue by induction on $-n$. When $n=-1$, we observe that $\binom{-1}{j}=(-1)^j\binom{j}{j}=(-1)^j$. The result follows by Proposition~\ref{prop-minus-1}.

  Assume that the result is true for $-n=k$, that is,
\[(na)*a_j=\sum_{k=1}^{m-j}\binom{n}{k}a_{k+j}.\]
  We want to prove that it is also true for $-n=k+1$. By Proposition~\ref{prop-1.5}, Lemma~\ref{lemma-4.2}, Corollary~\ref{corol-1.6}, and Proposition~\ref{prop-star-int}, we have that
  \begin{align*}
    ((-1&+n)a)*a_j=(-a)*((na)*a_j)+(-a)*a_j+(na)*a_j\\
                 &=(-a)*\left(\sum_{k=1}^{m-j}\binom{n}{k}a_{k+j}\right)+\sum_{k=1}^{m-j}\binom{-1}{k}a_{k+j}+\sum_{k=1}^{m-j}\binom{n}{k}a_{k+j}\\
                 &=\sum_{k=1}^{m-j}\binom{n}{k}((-a)*a_{k+j})+\sum_{k=1}^{m-j}\left(\binom{-1}{k}+\binom{n}{k}\right)a_{j+k}\\
                 &=\sum_{k=1}^{m-j}\binom{n}{k}\sum_{t=1}^{m-k-j}\binom{-1}{t}a_{t+k+j} + \sum_{k=1}^{m-j}\left(\binom{-1}{k}+\binom{n}{k}\right)a_{k+j}\\
                 &=\sum_{r=2}^{m-j}\biggl(\sum_{\substack{1\le k\le m-j\\1\le t\le m-k-j\\t+k=r}}\binom{n}{k}\binom{-1}{t}\biggr)a_{r+j}+\sum_{k=1}^{m-j}\left(\binom{-1}{k}+\binom{n}{k}\right)a_{k+j}\\
                 &=\sum_{r=2}^{m-j}\biggl(\sum_{t=1}^{r-1}\binom{n}{r-t}\binom{-1}{t}\biggr)a_{r+j}+\sum_{k=1}^{m-j}\left(\binom{-1}{k}+\binom{n}{k}\right)a_{k+j}\\
    &=\sum_{r=2}^{m-j}\biggl(\sum_{t=0}^r\binom{n}{r-t}\binom{-1}{t}\biggr)a_{r+j}+\left(\binom{-1}{1}+\binom{n}{1}\right)a_{1+j}\\
    &=\sum_{r=2}^{m-j}\binom{n-1}{r}a_{r+j}+(-1+n)a_{1+j}\\
    &=\sum_{r=1}^{m-j}\binom{n-1}{r}a_{r+j}=\sum_{k=1}^{m-j}\binom{n-1}{k}a_{k-j}.
  \end{align*}
By induction, the result is valid for all negative integers~$n$.
\end{proof}

We are now in a position to give a description of the subbrace generated by an element $a$ in a left brace with $A^{(3)}=A^{m+1}=\{0\}$.
\begin{proof}[Proof of Theorem~\ref{th-brace-1gen}]
  It is clear that if $T$ is a subbrace of $A$ containing $a$, then $S\subseteq T$. Observe also that $a_k=0$ if $k>m$. It is enough to check that $S$ is a subbrace of $A$, that is, $S$ is closed by taking sums, products, additive opposites, and multiplicative inverses. It is clear that $S$ is closed by taking sums and additive opposites. 

Since $xy=x+x*y+y$ for every $x$, $y\in A$ and $S$ is closed by taking sums, it will be closed under taking products if it is closed under taking the star operation $*$. Consider
\[t=\sum_{k=1}^m t_ka_k\in S,\quad u=\sum_{k=1}^mu_ka_k\in S.\]
Since $\sum_{k=2}^mt_ka_k\in A^2$, by Corollary~\ref{corol-1.6} and Lemma~\ref{lemma-4.2} we see that
\[t*u=(t_1a_1)*u=\sum_{k=1}^mu_k (t_1a)*a_k.\]
By Proposition~\ref{prop-star-int}, this is a linear combination with integer coefficients of elements of~$S$, and so $t*u\in S$.

To conclude the proof, we must show that $S$ is closed under taking inverses. Take $t=\sum_{i=1}^mt_ia_i$, we will show that
\[t^{-1}=\sum_{j=1}^m\left(-\sum_{k=0}^{j-1}\binom{-t_1}{k}t_{j-k}\right)a_j.\]
Call $n$ the right hand side of the previous equality.
Since $nt=n+n*t+t$, we compute
\begin{align*}
  n*t&=\left(-\binom{-t_1}{0}t_1a_1\right)*\sum_{i=1}^mt_ia_i\\
     &=(-t_1a_1)*\sum_{i=1}^mt_ia_i\\
     &=\sum_{i=1}^mt_i((-t_1a_1)*a_i)\\
     &=\sum_{i=1}^mt_i\sum_{k=1}^{m-i}\binom{-t_1}{k}a_{i+k}\\
     &=\sum_{r=1}^m\biggl(\sum_{\substack{1\le i\le m\\1\le k\le m-i\\i+k=r}}t_i\binom{-t_1}{k}\biggr)a_r\\
     &=\sum_{r=1}^m\biggl(\sum_{k=1}^{r-1}t_{r-k}\binom{-t_1}{k}\biggr)a_r.  
\end{align*}
by Lemma~\ref{lemma-4.2}, Corollary~\ref{corol-1.6} and Proposition~\ref{prop-star-int}. It follows that $nt=n+n*t+t=0$, the neutral element of $\cdot$, and so $n$ is the multiplicative inverse of $t$ in $A$. As $n\in S$ and the inverse is unique, $S$ is closed under taking inverses. This completes the proof.
\end{proof}

\begin{remark}\label{rem-left}
  We note that in Theorem~\ref{th-brace-1gen}, if for $1\le r\le m$ we call
  \[S_r=\Bigl\{\sum_{k=r}^mt_ka_k\mid t_k\in \Z, r\le k\le m\Bigr\},\]
  we have that $S_r\subseteq S^r$ as $a_k\in S^r$ for $r\le k\le m$.

  We prove now by induction on $r$ that $S^r\subseteq S_r$. For $r=1$, the result is clear since $S=S_1$. Suppose that $S^r\subseteq S_r$. Then $S^{r+1}=S*S^r\subseteq S*S_r$ and, since
  \begin{align*}
    \left(\sum_{k=1}^mv_ka_k\right)*\left(\sum_{k=r}^mt_ka_k\right)&=v_1a*\left(\sum_{k=r}^mt_ka_k\right)\\&=\sum_{k=r}^mv_1t_ka_{k+1}=\sum_{k=r+1}^mv_1t_{k-1}a_{k}\in S_{r+1},
  \end{align*}
  we conclude that $S^{r+1}\subseteq S_{r+1}$. By induction, we have that $S_r=S^r$ for $1\le r\le m$. For $r\ge m+1$, we have that $S^r=\{0\}$.
\end{remark}

\section{Free one-generated Smoktunowicz-nilpotent left braces with right nilpotency class two}\label{sec-Smoktunowicz-free}
Our next step is to prove Theorem~\ref{th-Smoktunowicz-free}, that gives the construction of a free one-generated left brace $B_m$ with $B_m^{(3)}=B_m^{m+1}=\{0\}$ for a natural number $m\ge 2$. As we have mentioned in the introduction, we will do it by means of the construction in Theorem~\ref{th-mod-brace}.

\begin{proof}[Proof of Theorem~\ref{th-Smoktunowicz-free}]
  Consider an integer $m\ge 2$. We define in
  \[B_m=\overbrace{\mathbb{Z}\times\dots\times\mathbb{Z}}^{(m)}\]
  the addition in the usual form
  \[(n_1,\dots, n_m)+(t_1,\dots, t_m)=(n_1+t_1,\dots,n_m+t_m).\]
  Given $\mathbf{n}=(n_1,\dots, n_m)\in B_m$ and $\mathbf{t}=(t_1,\dots, t_m)\in B_m$, we define $\lambda_{\mathbf{n}}(\mathbf{t})=(z_1,\dots, z_m)$ where, for $1\le i\le m$,
\[z_i=\sum_{k=0}^{i-1}\binom{n_1}{k}t_{i-k}.\]
Let us denote $z_i$ by $(\lambda_{\mathbf{n}}(\mathbf{t}))_i$, $1\le i\le m$.
It is clear that $\lambda_{\mathbf{n}}$ is an endomorphism of $(B, {+})$ for all $\mathbf{n}\in B_m$. We prove that $\lambda$ is a homomorphism from $(B, {+})$ to $\operatorname{End}(B, {+})$. Consider the elements of $B_m$ $\mathbf{n}=(n_1,\dots, n_m)$, $\mathbf{t}=(t_1,\dots, t_m)$, and $\mathbf{w}=(w_1,\dots, w_m)$. Then
\[(\lambda_{\mathbf{w}}(\mathbf{t}))_1=\sum_{k=0}^0\binom{w_1}{0}t_{1-k}=t_1\]
and so, for $1\le i\le m$,
\begin{align*}
  (\lambda_{\mathbf{n}}(\lambda_{\mathbf{w}}(\mathbf{t})))_i&=  \sum_{k=0}^{i-1}\binom{(\lambda_{\mathbf{w}}(\mathbf{t}))_1}{k}(\lambda_{\mathbf{w}}(\mathbf{t}))_{i-k}\\
                             &=\sum_{k=0}^{i-1}\binom{t_1}{k}\sum_{l=0}^{i-k-1}\binom{w_1}{l}t_{i-k-l}\\
                             &=\sum_{k=0}^{i-1}\sum_{l=0}^{i-k-1}\binom{t_1}{k}\binom{w_1}{l}t_{i-k-l}\\
                             &=\sum_{k=0}^{i-1}\sum_{r=k}^{i-1}\binom{t_1}{k}\binom{w_1}{r-k}t_{i-r}\\
  &=\sum_{k=0}^{r-1}\binom{t_1+w_1}{r}t_{i-r}=(\lambda_{\mathbf{n}+\mathbf{w}}(\mathbf{t}))_i.
\end{align*}
We also note that if $\mathbf{0}=(0,\dots, 0)\in B_m$, then
\[(\lambda_{\mathbf{0}}(\mathbf{t}))_i=\sum_{k=0}^{i-1}\binom{0}{k}t_{i-k}=t_i,\]
therefore $\lambda_{\mathbf{0}}=\id$. Consequently, if $\mathbf{n}\in B_m$, we have that $\lambda_{\mathbf{n}}$ has an inverse $\lambda_{-\mathbf{n}}$ and so the image of $\lambda$ is contained in the automorphism group of $(B, {+})$. Furthermore, for $\mathbf{n}$, $\mathbf{t}$, $\mathbf{w}\in B_m$ and $1\le i\le m$,
\[(\lambda_{\lambda_{\mathbf{w}}(\mathbf{n})}(\mathbf{t}))_i=\sum_{k=0}^{i-1}\binom{(\lambda_{\mathbf{w}}(\mathbf{n}))_1}{k}t_{i-k}=\sum_{k=0}^{i-1}\binom{n_1}{k}t_{i-k}=(\lambda_{\mathbf{n}}(\mathbf{t}))_i.\]
We conclude that $\lambda_{\lambda_{\mathbf{w}}(\mathbf{n})}=\lambda_{\mathbf{n}}$.

By Theorem~\ref{th-mod-brace}, we have that $B_m$ is a left brace with the product $\cdot$ given by $\mathbf{n}\mathbf{t}=\mathbf{n}+\lambda_{\mathbf{n}}(\mathbf{t})$ for $\mathbf{n}$, $\mathbf{t}\in B_m$ and $B^{(3)}=\{0\}$.

Let $\mathbf{b}=(1,0,\dots, 0)$. If $\mathbf{b}_1=b$ and $\mathbf{b}_{k+1}=\mathbf{b}*\mathbf{b}_k$ for $k\ge 1$, we prove that ${(\mathbf{b}_i)}_j=\delta_{i,j}$ for $1\le j\le m$ by induction on~$i$. The result is clear for $i=1$. If ${(\mathbf{b}_i)}_j=\delta_{i,j}$ for $1\le j\le m$ and some $i$, then
\begin{align*}
  {(\mathbf{b}_{i+1})}_j&={(\mathbf{b}*\mathbf{b}_{i})}_j={(\lambda_{\mathbf{b}}(\mathbf{b}_i)-\mathbf{b}_i)}_j\\
                        &=\sum_{k=0}^{j-1}\binom{1}{k}{(\mathbf{b}_i)}_{j-k}-\delta_{i,j}=\binom{1}{0}\delta_{i,j}+\binom{1}{1}\delta_{i,j-1}-\delta_{i,j}\\
                        &=\delta_{i,j-1}=\delta_{i+1,j}.
\end{align*}
By Theorem~\ref{th-brace-1gen}, we have that the subbrace of $B_m$ generated by $\mathbf{b}$ coincides with $B_m$. By Remark~\ref{rem-left}, we obtain that
\[B_m^r=\{(t_1,\dots, t_m)\in B_m\mid \text{$t_i=0$ for $1\le i<m$}\}\]
if $r\le m$, and $B_m^r=\{\mathbf{0}\}$ if $r>m$.

It is clear that, given a left brace $A$ with $A^{(3)}=A^{m+1}=\{0\}$ for a natural number $m\ge 2$, with the same notation as in Theorem~\ref{th-brace-1gen}, the map $(n_1,\dots, n_m)\longmapsto \sum_{k=1}^mn_ka_k$ defines a left brace epimorphism $\varphi$ from $B_m$ to the subbrace of $A$ generated by $a$ such that $\varphi(\mathbf{b})=a$, and this is the unique possible homomorphism with this condition.
\end{proof}
\section*{Acknowledgements}
The third author is very grateful to the Conselleria d'Innovaci\'o, Universitats, Ci\`encia i Societat Digital of the Generalitat (Valencian Community, Spain) and the Universitat de Val\`encia for their financial support and grant to host researchers affected by the war in Ukraine in research centres of the Valencian Community. He is sincerely grateful to the first and second authors for their hospitality, support and care. The third author is also grateful for the support of the Isaac Newton Institute for Mathematical Sciences and the University of Edinburgh provided in the frame of LMS Solidarity Supplementary Grant Programme. He is sincerely grateful to Agata Smoktunowicz.

We also thank  Lorenzo Stefanello for some interesting conversations that have helped us to improve the presentation of this paper.

\bibliographystyle{plain}
\bibliography{bibgroup}

\providecommand{\iflanguage}[3]{#3}\relax \providecommand{\mathfrak}{\mathcal}
  \providecommand{\acceptedin}{\iflanguage{spanish}{Aceptado en}{Accepted in}}
  \providecommand{\accepted}{\iflanguage{spanish}{Aceptado}{Accepted}}
  \providecommand{\talkat}{\iflanguage{spanish}{Charla en}{Talk at}}
  \providecommand{\plenarytalkat}{\iflanguage{spanish}{Charla plenaria
  en}{Plenary talk at}} \providecommand{\JIF}{JIF}
  \providecommand{\PhDcourseat}{\iflanguage{spanish}{Curso de doctorado en}{PhD
  course at}} \providecommand{\transrus}{\iflanguage{spanish}{Traducci{\'o}n
  del art{\'\i}culo original en ruso en}{Translation of the original paper in
  Russian from}}
  \providecommand{\supervisedby}{\iflanguage{spanish}{Direcci{\'o}n:}{Supervised
  by}} \providecommand{\visited}{\iflanguage{spanish}{visitada}{visited}}
  \providecommand{\inpress}{\iflanguage{spanish}{en
  prensa}{\iflanguage{catalan}{en premsa}{in press}}}
  \providecommand{\encurs}{\iflanguage{spanish}{en
  curso}{\iflanguage{catalan}{en curs}{in course}}}
  \providecommand{\aand}{\iflanguage{spanish}{y}{and}}
  \providecommand{\invitedtalkat}{\iflanguage{spanish}{Conferencia invitada
  en}{Invited talk at}} \providecommand{\posterat}{\iflanguage{spanish}{Póster
  en}{Poster at}}
  \providecommand{\invitedtalk}{\iflanguage{spanish}{Conferencia
  invitada}{Invited talk}}
  \providecommand{\oral}{\iflanguage{spanish}{Comunicaci{\'o}n oral}{Oral
  communication}}
  \providecommand{\Poland}{\iflanguage{spanish}{Polonia}{Poland}}
  \providecommand{\Germany}{\iflanguage{spanish}{Alemania}{Germany}}
  \providecommand{\Italy}{\iflanguage{spanish}{Italia}{Italy}}
  \providecommand{\Ireland}{\iflanguage{spanish}{Irlanda}{Ireland}}
  \providecommand{\Hungary}{\iflanguage{spanish}{Hungr{\'\i}a}{Hungary}}
  \providecommand{\Portugal}{\iflanguage{spanish}{Portugal}{Portugal}}
  \providecommand{\Cairo}{\iflanguage{spanish}{El Cairo, Egipto}{Cairo, Egypt}}
  \providecommand{\Bath}{\iflanguage{spanish}{Bath, Inglaterra, Reino
  Unido}{Bath, England, United Kingdom}}
  \providecommand{\presentada}[2]{\iflanguage{spanish}{presentada por
  #1}{presented by #2}}
  \providecommand{\Ponenciaplenaria}{\iflanguage{spanish}{Ponencia
  plenaria}{Plenary talk}}
  \providecommand{\Naples}{\iflanguage{spanish}{N{\'a}poles}{Naples}}
  \def\cprime{$'$} \providecommand{\germ}{\mathfrak}
\begin{thebibliography}{10}

\bibitem{BardakovNeshchadimYadav22-jpaa}
V.~G. Bardakov, M.~V. Neshchadim, and M.~K. Yadav.
\newblock On {${\lambda}$}-homomorphic skew braces.
\newblock {\em J.\ Pure Appl.\ Algebra}, 226:106961 (37 pages), 2022.

\bibitem{BonattoJedlicka-central-nilp-skew-braces}
M.~Bonatto and P.~Jedli{\v c}ka.
\newblock Central nilpotency of skew braces.
\newblock {\em J. Algebra Appl.}, 2022.
\newblock \url{https://doi.org/10.1142/S0219498823502559}.

\bibitem{Caranti20}
A.~Caranti.
\newblock Bi-skew braces and regular subgroups of the holomorph.
\newblock {\em J. Algebra}, 562:647--665, 2020.

\bibitem{CarantiStefanello21}
A.~Caranti and L.~Stefanello.
\newblock From endomorphisms to bi-skew braces, regular subgroups, the
  {Y}ang-{B}axter equation, and {H}opf-{G}alois structures.
\newblock {\em J. Algebra}, 587:462--487, 2021.

\bibitem{Cedo18}
F.~Ced{\'o}.
\newblock Left braces: solutions of the {Y}ang-{B}axter equation.
\newblock {\em Adv. Group Theory Appl.}, 5:33--90, 2018.

\bibitem{Childs19}
L.~N. Childs.
\newblock Bi-skew braces and {H}opf-{G}alois structures.
\newblock {\em New York J. Math.}, 25:574--588, 2019.

\bibitem{Gould72}
H.~W. Gould.
\newblock {\em Combinatorial identities}.
\newblock Morgantown Printing and Binding Co., Morgantown, WV, USA, 1972.

\bibitem{GuarnieriVendramin17}
L.~Guarnieri and L.~Vendramin.
\newblock Skew-braces and the {Y}ang-{B}axter equation.
\newblock {\em Math.\ Comp.}, 86(307):2519--2534, 2017.

\bibitem{JespersVanAntwerpenVendramin22}
E.~Jespers, A.~Van Antwerpen, and L.~Vendramin.
\newblock Nilpotency of skew braces and multipermutation solutions of the
  {Y}ang-{B}axter equation.
\newblock {\em Commun. Contemp. Math.}, 2022.
\newblock \url{https://doi.org/10.1142/S021919972250064X}.

\bibitem{Koch21}
Alan Koch.
\newblock Abelian maps, bi-skew braces, and opposite pairs of {H}opf-{G}alois
  structures.
\newblock {\em Proc. Amer. Math. Soc. Ser. B}, 8:189--203, 2021.

\bibitem{Rump07}
W.~Rump.
\newblock Braces, radical rings, and the quantum {Y}ang-{B}axter equation.
\newblock {\em J. Algebra}, 307:153--170, 2007.

\bibitem{Rump20}
W.~Rump.
\newblock One-generator braces and indecomposable set-theoretic solutions to
  the {Y}ang-{B}axter equation.
\newblock {\em Proc. Edinburgh Math. Soc.}, 63:676--696, 2020.

\bibitem{Smoktunowicz18-tams}
A.~Smoktunowicz.
\newblock On {E}ngel groups, nilpotent groups, rings, braces and the
  {Y}ang-{B}axter equation.
\newblock {\em Trans.\ Amer.\ Math.\ Soc.}, 370(9):6535--6564, 2018.

\bibitem{SmoktunowiczSmoktunowicz18}
A.~Smoktunowicz and A.~Smoktunowicz.
\newblock Set-theoretic solutions of the {Y}ang-{B}axter equation and new
  classes of {R}-matrices.
\newblock {\em Linear Algebra Appl.}, 546:86--114, 2018.

\bibitem{StefanelloTrappeniers23-jpaa}
L.~Stefanello and S.~Trappeniers.
\newblock On bi-skew braces and brace blocks.
\newblock {\em J.\ Pure Appl.\ Algebra}, 227:107295, 2023.

\end{thebibliography}
\end{document}